 \newtheorem{thm}{Theorem}[section]
 \newtheorem{lem}[thm]{Lemma}
 \newtheorem{prop}[thm]{Proposition}
 \theoremstyle{definition}
 \newtheorem{definition}[thm]{Definition}
 \numberwithin{equation}{section}
\newcommand{\half}{{\textstyle{1\over2}}}
\newcommand{\R}{\mathbb R}%
\newcommand*{\C}{\mathbb C}%
\newcommand{\g}{\mathfrak{g}}
\newcommand{\scal}{{s}}
\newcommand{\Cric}{{ChRic}}
\def\operatorname#1{\mathop{\operator@font #1}\nolimits}%
\DeclareMathOperator{\End}{End}
\DeclareMathOperator{\Tr}{Tr}
\DeclareMathOperator{\Ker}{Ker}
\newcommand{\Span}{\operatorname{Span}}
\newcommand{\Image}{\operatorname{Im}}
\DeclareMathOperator{\Id}{Id}
\newcommand*{\cyclic}{\mathop{\kern0.9ex{{+}\kern-2.2ex\raise-.29ex%
      \hbox{\Large\hbox{$\circlearrowright$}}}}\limits}
\title{Distributions associated to  almost complex structures 
on  symplectic manifolds 
. 
}
\author{
Michel Cahen$^{(1)}$, Maxime G\'erard$^{(2)}$,  Simone Gutt$^{(1,2)}$,  and
 Manar Hayyani$^{(3)}$\\
\scriptsize{mcahen@ulb.ac.be,  maxime.gerard57@gmail.com, 
 sgutt@ulb.ac.be, manar.hayyani@yahoo.fr}\\
\footnotesize{(1)  D\'{e}partement de Math\'{e}matique, Universit\'{e} Libre de Bruxelles}\\[-7pt]
\footnotesize Campus Plaine, CP 218, Boulevard du Triomphe\\[-7pt]
\footnotesize BE -- 1050 Bruxelles, Belgium\\[-7pt]
\footnotesize Membre de l'Acad\'emie Royale de Belgique.\\
\footnotesize (2) Universit\'e de Lorraine\\ [-7pt]
\footnotesize Institut Elie Cartan de Lorraine, UMR 7502,\\[-7pt]
\footnotesize Ile du Saulcy, F-57045 Metz, France. \\
\footnotesize (3) Universit\'e Moulay Isma\"\i l,\\ [-7pt]
\footnotesize  Mekn\`es, Maroc
}
\date{~\\
}
\date{}
\begin{document}

\maketitle

\begin{abstract}
We look at methods to select triples $(M,\omega,J)$ consisting of a symplectic manifold $(M,\omega)$ endowed with a compatible positive  almost complex structure $J$, in terms of the Nijenhuis  tensor $N^J$ associated to $J$.\\
We study in particular the image  distribution $\Image N^J$. 

 \end{abstract}

\section*{Introduction}
On a smooth symplectic manifold $(M,\omega)$ (whose dimension will be denoted $2n$ in this paper), an almost complex structure $J$ (i.e. a smooth field of endomorphisms of the tangent bundle whose square is equal to minus the identity) is said to be {\it{ compatible}} with $\omega$ if  the tensor $g_{J}$ defined by 
$
g_{J}(X,Y):=\omega(X,JY) 
$
is symmetric, hence yields an {\it{associated  pseudo-Riemannian metric}}.
A compatible almost complex structure is said to be {\it{positive}} when the associated metric $g_{J}$ is Riemannian.\\
It is well known 
that on any symplectic manifold, there exist positive compatible almost complex structures and that the space $\mathcal{J}(M,\omega)$ of those structures is an infinite dimensional contractible Fr\'echet space. 
This contractibility shows that  the Chern
classes  of a symplectic manifold $c_k(M,\omega):=c_k(TM,J)\in H^{2k}_{deRham}(M;\R)$ are well defined.
The choice of a generic $J$ in this space allowed Gromov and Floer to
define the notion of pseudo-holomorphic curves and their generalisations which are essential tools in the study of symplectic invariants and symplectic topology.
Here, instead of dealing with generic $J$'s, we review methods to select  particular $J$'s on a given symplectic manifold or more  precisely to select  triples $(M,\omega, J)$  with $(M,\omega)$ symplectic and $J$ a positive compatible almost complex structure.
Such a triple is equivalent to the data of an {\it{almost K\"ahler manifold}} $(M,g,J)$, i.e. an almost  Hermitian manifold 
(which is  a Riemannian manifold $(M,g)$ with an almost complex structure $J$  which is compatible in the sense that the tensor $\omega$ defined by $\omega(X,Y)=g(JX,Y)$ is skewsymmetric),  with the extra condition  that $d\omega=0$.\\

An almost complex structure $J$  on  a manifold $M$ is said to be {\it{integrable}} if it is induced by a complex structure on $M$; this means that one can locally define complex coordinates on $M$ and that the changes of coordinates are holomorphic; the associated almost complex structure is then given by $J \frac{\partial}{\partial x^k}= \frac{\partial}{\partial y^k}$ if $x^k$ and $y^k$ are the real and imaginary part of the local complex coordinates $\{z^k=x^k+iy^k \,\vert\, k=1,\ldots,n\}$.\\
The Newlander-Nirenberg theorem asserts that an almost complex structure $J$ on a manifold $M$  is integrable if and only if its Nijenhuis tensor $N^J$ vanishes identically.
Recall that the Nijenhuis tensor (also called Nijenhuis torsion) associated to an almost complex structure $J$ is the tensor of type 
 $(1,2)$ defined by
\begin{equation}
N^J(X,Y):=[JX,JY]-J[JX,Y]-J[X,JY]-[X,Y] \quad \forall X,Y\in \chi(M).
\end{equation}

A first natural selection principle for triples $(M,\omega,J)$ is to ask that $N^J=0$, hence to look at the class of manifolds which are both symplectic and complex, with compatibility conditions between these two structures;  the triple $(M,\omega,J)$ satisfies $N^J=0$ if and only if the manifold $(M,g_J,J)$ is {\it{K\"ahler}}, i.e. the covariant derivative of $J$, with respect to the Levi Civita connection associated to the metric $g_J$, vanishes.
Indeed, if $(M,g,J)$  is an almost Hermitian manifold  and  if $\nabla^g$ denotes the Levi Civita connection associated to the metric $g$,  one has:
\begin{equation}\label{eq:dJN}
2\omega((\nabla^g_XJ)Y,Z)=-d\omega(X,JY,Z)-d\omega(X,Y,JZ)+\omega(N^J(Y,Z),JX).
\end{equation}
K\"ahler manifolds form a very important class of symplectic manifolds. However there are many symplectic manifolds which do not admit a K\"ahler structure.\\
 A first compact example was given by Thurston in 1976, on the nilmanifold $M=\Gamma \backslash G$
 where $G$ is the $4$-dimensional nilpotent group
$G=\R^4=\R\times {\mathbb{H}}_3$, with multiplication defined  by
$(a^1,a^2,b^1,b^2)\cdot (x^1,x^2,y^1,y^2)=
 (a^1+x^1, a^2+x^2+b^1y^2,b^1+y^1,b^2+y^2)$, and $\Gamma=\mathbb{Z}^4$.
The group $G$ is endowed with the left invariant symplectic structure
$ \tilde\omega=dx^1\wedge dy^1+dx^2\wedge dy^2$,
  and  the symplectic structure on $M$ is the one lifting to $\tilde\omega$ on $G$.
We shall construct other examples of  such {\it{compact nilmanifolds}}, which are quotients of nilpotent Lie groups by lattices (i.e. discrete subgroups acting cocompactly).\\

To get ways of selecting  some geometrical data - here triples $(M,\omega, J)$ - we consider the three following interrelated methods:\\
- decompose into irreducible components  tensors associated to the data and 
impose conditions on some of those components;\\
- consider functionals defined from the geometrical input and look for extrema of those functionals;\\
- define distributions associated to the data and impose conditions on those.\\
In this paper, we apply those three approaches, using the Nijenhuis tensor $N^J$.\\

A possible method to select triples $(M,\omega,J)$  would be to ask for the annihilation of some components of the Nijenhuis tensor.
However the space of tensors  having the symmetries of the Nijenhuis tensor at a given point $x\in M$ is irreducible under the action of the unitary group viewed as  $Gl(T_xM,\omega_x,J_x):=Sp(V,\Omega)\cap Gl(V,j)$, the group of linear transformations of the tangent space at that point, $T_xM$, preserving the symplectic $2$-form $\omega_x$ and the almost complex structure $J_x$ at that point. We include a proof  of this result in section \ref{section:decompositionofN}.
Hence, the only meaningful linear condition one can impose to the Nijenhuis tensor is to vanish identically. This method thus leads only to   K\"ahler manifolds.\\
Observe , from equation \eqref{eq:dJN}, that the data of $\nabla^{g_J}J$ is equivalent to the data of  $N^J$. The irreducibility of $N^J$  shows in particular that there is no  notion of nearly K\"ahler symplectic manifold; recall that an almost Hermitian manifold $(M,g,J)$ is said to be {\it{nearly K\"ahler}} when $\left(\nabla^{g}_XJ\right)X=0$; if $d\omega=0$, nearly K\"ahler implies K\"ahler.\\

Other tensors can be associated to our data $(M,\omega,J)$ via the use of a connection. Natural connections, related tensors and some conditions on those which have been studied are reviewed in section \ref{section:connections}.\\

Blair and Ianus introduced in 1986 \cite{bib:BlairIanus} a variational principle based on a functional defined on the space $\mathcal{J}(M,\omega)$ of positive compatible almost complex structures on a compact symplectic manifold $(M,\omega)$, with the functional  defined in terms of the Nijenhuis tensor.  It associates to a $J\in \mathcal{J}(M,\omega) $  the integral of the square of the norm of the Nijenhuis tensor $N^J$:
$$
\mathcal{F}(J)=\int_M \left\Vert N^J \right\Vert^2_{g_J} \frac {\omega^n} {n!},
$$
where the norm on tensors is defined  using the metric $g_J$. \\
Since $N^J$ is irreducible under the action of the unitary group, all catenations of this tensor (using $g_J,\omega$ and their inverses) vanish; there is no linear function of $N^J$ (involving such catenations) which is invariant under the action of the unitary group. To construct a  functional involving $N^J$, the simplest is thus to integrate a polynomial of degree $2$ in $N^J$ involving catenations (using $g_J,\omega$ and their inverses). A straightforward calculation shows that up to a constant, there is only one such expression and it is the square of the norm of $N^J$.\\
This functional was  studied in 2012 by J.D. Evans in \cite{bib:JDEvans};
he showed that the infimum of this functional is zero, even when there is no K\"ahler structure, as soon as the symplectic $2$-form  $\omega$ is rational ($[\omega] \in H^2(M,\mathbb{Q})$).\\
Thurston's manifold $M=\mathbb{Z}^4 \backslash (\R\times {\mathbb{H}}_3)$, with its symplectic structure as described above, admits the $1$-parameter family of  positive compatible almost complex structures induced by  left invariant almost complex strutures  $J_{(\alpha)}$, for $\alpha>0$ on $\R\times {\mathbb{H}}_3$, 
whose matrix at the point $p=(x^1,x^2,y^1,y^2)$ in the basis $\{\frac{\partial}{\partial x^1}_{\vert_p},\frac{\partial}{\partial x^2}_{\vert_p},\frac{\partial}{\partial y^1}_{\vert_p},\frac{\partial}{\partial y^1}_{\vert_p}\}$
is given by {\scriptsize{$J_{(\alpha)_p}=\left(\begin{matrix} 0 & 0 &-1/\alpha &  0\\ 0 & y^1 & 0 &  -1-(y^1)^2\\ \alpha & 0 & 0 &  0 \\ 0 & 1 & 0 &  -y^1\end{matrix}\right)$}}.
Since the square of the norm of the Nijenhuis tensor  is the constant function equal to $\Vert N^{J_{(\alpha)}} \Vert^2_{g_{J_{(\alpha)}}}=  8\alpha$,
this example shows that a functional given by integrating any positive power of the norm of the Nijenhuis tensor would have the same defect.

Some other variational principles which have been considered are reviewed in section  \ref{section:variational principles}.\\

Given a triple $(M,\omega,J)$, we define the {\it{image distribution}}  on  $M$, denoted $\Image N^J$, whose value $(\Image N^J)_x$ at a point $x\in M $  is the subspace of the tangent space $T_xM$ spanned by all values  $N^J_x(X,Y)$. Since $N^J_x(JX,Y)=-JN^J_x(X,Y)$, this distribution is stable by $J$, hence symplectic. We also consider the distribution  $(\Image N^J)^\perp$ which associates to a point $x$  the orthogonal complement (which coincides whether it is relative to $\omega_x$ or to $g_{J_x}$) of  $(\Image N^J)_x$.\\
We prove in section \ref{section:dimdistrib}  that for any $4$-dimensional manifold, the dimension at any point $x$ of $(\Image N^J)_x$ is equal to $0$ or $2$.
In any other dimension $2n,\, n>2$, we give examples of triples $(M,\omega,J)$ for which  the dimension of $(\Image N^J)_x$ is constant and can take any even value between  $0$ and $2n$. Furthermore, all possible cases of involutivity or non involutivity for the distributions $\Image N^J$ and for $(\Image N^J)^\perp$ arise independently. The examples 
we give are  symplectic groups (i.e. Lie groups which carry a left invariant symplectic structure) with left invariant positive compatible almost complex structures; they
are described in  Maxime G\'erard's thesis \cite{bib:gerard}.\\



We say that  $J$ is {\it{maximally non integrable}} if $\Image N^J$ is the whole tangent bundle.
We prove in section \ref{section:parallelimage} that a triple $(M,\omega,J)$, for which  the image distribution is parallel with respect to the Levi Civita connection,  is necessarily locally the product of a K\"ahler manifold $(M_1,\omega_1,J_1)$ and a $(M_2,\omega_2,J_2)$ such that $J_2$ is maximally non integrable.

If the covariant derivative of the Nijenhuis tensor with respect to the Levi Civita connection  vanishes, it clearly implies that the image distribution is parallel. We prove the stronger result that given a triple $(M,\omega,J)$, then  $\nabla^{g_J} N^J=0$ implies $N^J=0$.\\

Section \ref{section:twistor} presents the two natural almost complex structures $J^{\pm}$ on the twistor space $\mathcal{T}$ over the hyperbolic space $H_{2n}$. This twistor space can be viewed as an  adjoint orbit in the semisimple Lie algebra $so(1,2n)$, and thus carries a Kirillov-Kostant-Souriau symplectic structure $\omega$ .
Both  $J^{\pm}$ are compatible with $\omega$, $J^-$ is not integrable and is positive, whereas $J^+$ is integrable but is not positive. We show that $J^-$ is maximally non integrable.

 \section*{Acknowledgement} This work is part of the project ``Symplectic techniques in differential geometry", funded by the ``Excellence of Science (EoS)" program 2018-2021 of the FWO/F.R.S-FRNS.

\section{Decomposition of the Nijenhuis tensor}\label{section:decompositionofN}

For any almost complex structure $J$, one has
\begin{equation}\label{prop:JNJ}
N^J(X,Y)=-N^J(Y,X),\qquad N^J(JX,Y)=-JN^J(X,Y),
\end{equation}
and if $J$ is compatible with a symplectic structure $\omega$, 
\begin{equation}\label{prop:cyclicN}
\cyclic_{X,Y,Z}\omega(N^J(X,Y),Z)=0
\end{equation}
where $\cyclic_{X,Y,Z}$ denotes the sum over cyclic permutations of $X,Y$ and $Z$.
{\scriptsize{Indeed 
\begin{eqnarray*}
\cyclic_{X,Y,Z}\omega(N^J(X,Y),Z)&=&\cyclic_{X,Y,Z}\left(\omega([JX,JY],Z)+\omega([JX,Y],JZ)+\omega([X,JY],JZ)-\omega([X,Y],Z)\right)\\
&=& \cyclic_{X,Y,Z}\left( -d\omega(JX,JY,Z)+JX\omega(JY,Z)+JY(\omega(Z,JX) +Z\omega(X,Y)-\omega([X,Y],Z)   \right)\\
&=& \cyclic_{X,Y,Z}\left( -d\omega(JX,JY,Z)+JX(\omega(JY,Z)+\omega(Y,JZ))+d\omega(X,Y,Z)\right)=0.
\end{eqnarray*}}}
As mentioned earlier, a first idea to select some positive compatible  almost complex structures on a symplectic manifold would be to try to annihilate some components of the corresponding $N^J$; for this to have a meaning, the set of components should be invariant under the natural pointwise action of the unitary group. This however is not possible (beyond the K\"ahler case, where one asks for the vanishing of the whole tensor $N^J$) since one has the following irreducibility property.
\begin{thm}
Consider a real symplectic vector space $(V,\Omega)$ of dimension $2n$ endowed with a positive compatible complex structure $j$. \\The space $\mathcal{N}(V,\Omega,j)$ of {\scriptsize{$\left(\begin{array}{c}1\cr 2\end{array}\right)$}}- tensors $T: V\times V\rightarrow V$ satisfying
$$T(X,Y)=-T(Y,X),\quad T(jX,Y)=-jT(X,Y), \quad {\textrm{and }}\, \cyclic_{X,Y,Z}\Omega(T(X,Y),Z)=0$$
is irreducible under the action of the unitary group $Sp(V,\Omega)\cap Gl(V,j)$.
\end{thm}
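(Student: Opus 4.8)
The plan is to complexify the whole situation and read off the decomposition from the $\pm i$-eigenspace splitting of $j$. Extend $j$, $\Omega$ and the tensors $\C$-linearly to $V^{\C}:=V\otimes\C$ and write $V^{\C}=V^{1,0}\oplus V^{0,1}$ for the $(+i)$- and $(-i)$-eigenspaces of $j$, each of complex dimension $n$. Compatibility of $\Omega$ with $j$ makes $V^{1,0}$ and $V^{0,1}$ totally isotropic for $\Omega$ and pairs them nondegenerately, so $\Omega$ gives a $G$-equivariant identification $V^{0,1}\cong (V^{1,0})^{*}$, where $G:=Sp(V,\Omega)\cap Gl(V,j)$, which under the positivity hypothesis is the compact unitary group $U(n)$.

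First I would unwind the three defining conditions on an element $T$ of $\mathcal{N}^{\C}:=\mathcal{N}(V,\Omega,j)\otimes\C$. The identity $T(jX,Y)=-jT(X,Y)$ together with $\C$-linearity forces $T(V^{1,0},\cdot)\subseteq V^{0,1}$ and $T(V^{0,1},\cdot)\subseteq V^{1,0}$; combined with skew-symmetry this kills the mixed part, i.e.\ $T(V^{1,0},V^{0,1})=0$. Hence $T$ is completely determined by its two ``pure'' components $\Lambda^{2}V^{1,0}\to V^{0,1}$ and $\Lambda^{2}V^{0,1}\to V^{1,0}$, which are exchanged by complex conjugation (so for real $T$ the second is the conjugate of the first). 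Testing the cyclic identity $\cyclic_{X,Y,Z}\Omega(T(X,Y),Z)=0$ on the various combinations of types, one checks that every mixed one holds automatically and that the only surviving constraint is the one with $X,Y,Z$ all in $V^{1,0}$ (and its conjugate).

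Next I would identify this surviving piece as an irreducible module. Put $W:=V^{1,0}$ and, using $\Omega$ to regard $T|_{\Lambda^{2}W}$ as the $3$-covariant tensor $\tau(X,Y,Z):=\Omega(T(X,Y),Z)$ on $W$ (skew in the first two slots), a one-line computation shows that the full skew-symmetrization of $\tau$ is a nonzero multiple of $\cyclic_{X,Y,Z}\tau(X,Y,Z)$; thus the cyclic condition is equivalent to $\tau$ lying in the kernel of the antisymmetrization map $\Lambda^{2}W^{*}\otimes W^{*}\to\Lambda^{3}W^{*}$. By the Pieri rule this kernel is the single irreducible module $\mathbb{S}_{(2,1)}W^{*}$ for the group $Gl_{\C}(W)$ of $\C$-linear automorphisms of $W$ (nonzero precisely when $n\geq 2$), and by Weyl's unitary trick a rational $Gl_{\C}(W)$-module is irreducible over $Gl_{\C}(W)$ if and only if it is irreducible over its compact real form $U(n)$. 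Denoting this $U(n)$-module by $A$, we get $\mathcal{N}^{\C}\cong A\oplus\overline{A}$ with $\overline{A}$ the conjugate component.

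Finally, I would invoke the standard fact that a real representation whose complexification splits as $A\oplus\overline{A}$ with $A$ irreducible and $A\not\cong\overline{A}$ is itself irreducible: a proper real invariant subspace would complexify to a proper complex invariant subspace stable under conjugation, which is impossible. That $A\not\cong\overline{A}$ follows by looking at the central circle of $U(n)=G$, which acts on $A$ by the character $e^{-3i\theta}$ and on $\overline{A}$ by $e^{3i\theta}$. This yields the irreducibility of $\mathcal{N}(V,\Omega,j)$ under $G$. I expect the only genuine obstacle to be the bookkeeping in the middle step: keeping straight the $(1,0)/(0,1)$ types and verifying that the cyclic identity degenerates exactly to ``no $\Lambda^{3}$-part''; the representation-theoretic input and the final real/complex argument are routine.
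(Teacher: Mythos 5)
Your proof is correct and, at its core, follows the same route as the paper: both arguments reduce the statement to the irreducibility, as a $\Gl(n,\C)$-module, of the kernel of the skew-symmetrization map $\Lambda^2W^*\otimes W^*\to\Lambda^3W^*$. The packaging differs slightly: the paper treats $W=(V,j)$ directly as a complex $n$-dimensional space and encodes $T$ as the $\C$-trilinear form $\tilde T(X,Y,Z)=\Omega(T(X,Y),Z)-i\Omega(jT(X,Y),Z)$ (a real-linear bijection onto your component $A$), then identifies the kernel via the Koszul exact sequence and Schur--Weyl duality with $S_3$, whereas you complexify $V$, sort the three defining identities by $(1,0)/(0,1)$ types, and quote the Pieri rule; these steps are interchangeable. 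The one substantive difference is the final descent from complex to real irreducibility: the paper asserts that irreducibility of the real space $\mathcal N(V,\Omega,j)$ is \emph{equivalent} to complex irreducibility of $\widetilde{\mathcal N}$, but as stated this equivalence also requires knowing that the complex module is not of real type (a complex-irreducible module carrying an invariant real structure has a reducible underlying real representation). Your argument supplies exactly this missing ingredient, namely $A\not\cong\overline A$, verified on the central circle of $U(n)$, so your version of the last step is the more complete one.
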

\begin{proof}
Denote by $W$ the space $V$ viewed as a complex vector space of dimension $n$, where the multiplication of a vector $X$ by $i$ is given by $jX$. Denote by $h$ the hermitian product on $W$ defined by $h(X,Y):=\Omega(X,jY)-i\Omega(X,Y)$;  the unitary group of $(W,h)$
is $U(W,h)=Sp(V,\Omega)\cap Gl(V,j)$.\\
For any $T\in \mathcal{N}(V,\Omega,j)$, define  $$\tilde{T} (X,Y,Z):=\Omega(T(X,Y),Z)-i\Omega(jT(X,Y),Z).$$ Since  $\tilde{T} (jX,Y,Z)= \tilde{T} (X,jY,Z)= \tilde{T} (X,Y,jZ)=i \tilde{T} (X,Y,Z)$, $\tilde{T}$ is a  $\C$-linear map $\tilde{T} : \otimes^3W\rightarrow \C$ with  the symmetry properties $$\tilde{T} (X,Y,Z)=-\tilde{T} (Y,X,Z)\qquad \textrm{ and } \qquad \cyclic_{X,Y,Z} \tilde{T}(X,Y,Z)=0.$$
The space $\widetilde{\mathcal{N}}$ of all $\C$-linear map $\otimes^3W\rightarrow \C$ with the above symmetries, is in bijection with $ \mathcal{N}(V,\Omega,j)$; it is  also the kernel of the map 
$a: \Lambda^2W^*\otimes W^*\rightarrow \Lambda^3W^*$ defined by skewsymmetrization.
Using Koszul long exact sequence, whose maps are given by  skewsymmetrization, $$0\rightarrow S^3W^*\rightarrow W^*\otimes S^2W^*\rightarrow \Lambda^2W^*\otimes W^*\rightarrow \Lambda^3W^*\rightarrow 0,$$ one sees that  the space $\widetilde{\mathcal{N}}$ is isomorphic to the image of $W^*\times S^2W^*$, hence to the quotient $$\widetilde{\mathcal{N}} \simeq(W^*\otimes S^2W^*)/S^3W^*.$$ To prove that the real  vector space $\mathcal{N}(V,\Omega,j)$ is irreducible under the action of  $Sp(V,\Omega)\cap Gl(V,j)$ is equivalent to show that  the complex space $\widetilde{\mathcal{N}}$ is irreducible under the action of  $U(W,h)$. This amounts to show that $W^*\otimes S^2W^*$ splits  exactly into two irreducible components under the action of $U(W,h)$.  This is equivalent to show that $W\times S^2W$ splits  exactly into two irreducible components under the action of $U(W,h)$, or, equivalently, 
 under the action of its Lie algebra $\mathfrak{u}(W,h)$,
  or  under the action of the  complexification $\mathfrak{gl}(W,\C)$ of $\mathfrak{u}(W,h)$, or, still equivalently,  under the action of the group $Gl(W,\C)$. Now any representation of $Gl(W,\C)$ on $\otimes^kW$ commutes with the action of the permutation group $S_k$ (acting by permutations of the factors) and this gives a splitting (see, for instance, Goodman and Wallach \cite{bib:GoodmanWallach})  $$\otimes^kW=\oplus_i V_i\otimes W_i$$ where the $V_i$'s are inequivalent irreducible representations of $Gl(W,\C)$ and the $W_i$'s are inequivalent representations of $S_k$. There exist exactly $3$ inequivalent representations of $S_3$, namely the trivial one on $\C$, the one on $\C$ given by the signature and the $2$-dimensional representation  corresponding to the action of $S_3$ on the space of vectors in $\C^3$ which are orthogonal to $(1,1,1)$. Hence the splitting of $\otimes^3W$ reads
 $$\otimes^3W= S^3W\oplus \Lambda^3W\oplus \left(\tilde{V}\otimes \C^2\right)$$ where $\tilde{V}$ is an irreducible representation of $Gl(W,\C)$. Since  $\otimes^3W=\left(W\otimes S^2W\right)\oplus \left(W\otimes \Lambda^2W\right)$,  where both factors are stable under the action, and since $S^3W$ (resp. $\Lambda^3W$) is a proper subset of $W\otimes S^2W$ (resp. $W\otimes \Lambda^2W$), 
 we have $W\otimes \Lambda^2W=\Lambda^3W \oplus \tilde{V}$ and $W\otimes S^2W=S^3W\oplus \tilde{V}$. The complex space $\widetilde{\mathcal{N}}$  is thus isomorphic to $\tilde{V}^*$ and is irreducible under the action of the unitary group.  Hence,  the real vector space $\mathcal{N}(V,\Omega,j)$ is indeed irreducible under the action of 
 $Sp(V,\Omega)\cap Gl(V,j)$.
\end{proof}

\section{Connections associated to a triple $(M\omega,J)$}\label{section:connections}
To select classes of triples $(M,\omega,J)$ via the use of associated tensors, a natural way is to choose a connection associated to the triple and look at its curvature.
We consider here the three most natural connections associated to a triple.\\

 {\bf{The Levi Civita connection $\nabla^{g_J}$}} is the canonical connection associated to the metric $g_J$; it is characterized by the fact that it is torsion free and $\nabla^{g_J}{g_J}=0$. 
 Remark that equation \eqref{eq:dJN} yields 
\begin{equation} \label{eq:NnablaJ}
2\omega((\nabla_X^{g_J}J)Y,Z)=\omega(N^J(Y,Z),JX)
\end{equation} 
so that the data of $\nabla_X^{g_J}J$ is equivalent to the data of $N^J$. In particular, the space of such tensors at a point is irreducible under the action of the unitary group.\\
 Since $N^J(JX,Y)=-JN^J(X,Y)$, equation \eqref{eq:NnablaJ} yields
\begin{equation} \label{eq:nablaJN}
\nabla_{JX}^{g_J}J=-J\nabla_{X}^{g_J}J.
\end{equation}

The curvature tensor $R^{g_J}$ of the Levi Civita connection can be decomposed into ten irreducible pieces under the action of the unitary group (Tricerri and Vanhecke  \cite{bib:Tricerri}).
The equation
$Ric^{g_J}(JX,JY)=Ric^{g_J}(X,Y)$ appears in a variational problem (see section \ref{section:variational principles}).
Let us remark  that neither $\omega$ nor $J$ are parallel with respect to $\nabla^{g_J}$ unless $(M,\omega,J)$ is K\"ahler. \\

 Any torsion free connection $\nabla$ on a symplectic manifold projects in a canonical way on a symplectic connection $\nabla'$ (i.e. a torsion free connection for which the symplectic form is parallel) defined by $\nabla'_XY:=\nabla_XY+\frac{1}{3}D(X,Y)+\frac{1}{3}D(Y,X)$ where $D$ is the tensor defined by $\omega(D(X,Y),Z)=(\nabla_X\omega)(Y,Z)$. Projecting the Levi Civita connection $\nabla^{g_J}$ in this way, since $(\nabla^{g_J}_X\omega)(Y,Z)=g((\nabla^{g_J}_XJ)Y,Z)=-\omega(J(\nabla^{g_J}_XJ)Y,Z)$,  one gets {\bf{the symplectic connection $\nabla^{s}$}} defined by
  \begin{equation}\label{eq:Symplconnection}
  \nabla^{s}_XY:=\nabla^{g_J}_XY-\frac{1}{3}J\left(\nabla^{g_J}_X J\right)Y-\frac{1}{3}J\left(\nabla^{g_J}_Y J\right)X.   \end{equation}
  The curvature tensor of such a connection splits, under the action of the symplectic group, into two irreducible components (Vaisman \cite{bib:Vaisman}). Its decomposition under the unitary group has not been achieved. 
 Remark  that $J$ is not  parallel with respect to $\nabla^{s}$ unless $(M,\omega,J)$ is K\"ahler.\\

A third natural connection associated to $(M,\omega,J)$  is {\bf{the Chern connection}} defined by  \begin{equation}\label{eq:Chernconnection}
\nabla^{C}_XY:=\nabla^{g_J}_XY-\frac{1}{2}J\left(\nabla^{g_J}_X J\right)Y;
\end{equation}
it is the unique linear connection $\nabla$ on $M$ such that  $\nabla\omega=0,\,\nabla J=0$ and such that its torsion is given by $T^\nabla(X,Y)=\frac{1}{4}N^J(X,Y)$ (equivalently $T^{\nabla}(JX,Y)=-JT^{\nabla}(X,Y)$). Torsion is unavoidable when the almost complex structure is not integrable : for any  connection $\nabla'$ with $\nabla' J=0$ one has 
\begin{equation}\label{eq:torsionN}
T^{\nabla'}(JX,JY)-JT^{\nabla'}(JX,Y)-JT^{\nabla'}(X,JY)-T^{\nabla'}(X,Y)=-N^J(X,Y).\end{equation} 
The curvature $R^{C}$ of this connection is a natural tensor associated to the data as well as tensors obtained by contractions of  this curvature, in particular   the {\it{Chern Ricci form }}
\begin{equation}\label{eq:ChernRicci}
\Cric(X,Y):=\Tr_\R JR^C(X,Y)=-2\Tr_\C R^C(X,Y),
\end{equation}
where $\Tr_\R$ denotes the real trace and where $\Tr_\C$ denotes the complex trace of the endomorphism viewed as complex linear on the $n$ dimensional complex vector space $(TM,J)$. The {\it{Hermitian scalar curvature}} 
$\scal^C=\scal^{C(\omega,J)}$ is defined by 
\begin{equation}\label{eq:Hermscalcurv}\scal^C\omega^n=n \,\Cric\wedge\omega^{n-1}.
\end{equation} Of course, all these can be expressed in terms of the Riemannian curvature $R^{g_J}$ and the Nijenhuis tensor $N^J$. One has 
    $\scal^C=\scal^{g_J}+2\Vert N^J\Vert_{g_J}^2,$ where, again,  the norm of the Nijenhuis tensor is the norm on tensors defined by extending the norm on $TM$ defined by $g_J$.
The Chern Ricci form  is  $2i$ times the curvature of the Hermitian connection induced by $\nabla^C$ on the Hermitian complex line bundle $\Lambda^n(T_J^{1,0}M)$, where $TM^\C= T_J^{1,0}M \oplus T_J^{0,1}M$ is the usual splitting of the complexified tangent bundle into $\pm i$-eigenvalues for $J$ and the Hermitian structure on $TM^\C$ is given by $h(X,Y)=g_J(X,Y)-i\omega(X,Y)$.
The Chern Ricci form represents thus the  cohomology class $4\pi c_1(M,\omega)$.  When $J$ is integrable $\Cric$ is  the Ricci form of the K\"ahler metric $g_J, \,Ric^{g_J}(X,JY)$.\\
S. Donaldson  proved in \cite{bib:Donaldson} that the map $s: \mathcal{J}(M,\omega)\rightarrow C^\infty(M) : J\mapsto  \scal^C=\scal^{C(\omega,J)} $ can be viewed as a moment map for the action  of the space of Hamiltonian vector fields $ham(M,\omega)=\{X\in\chi(M)\,\vert\, \iota(X)\omega=df_X \, \textrm{ for a } f_X\in C^\infty(M)\}$ on the space $\mathcal{J}(M,\omega)$ of positive compatible almost complex structures on a compact symplectic manifold $(M,\omega)$. 

V. Apostolov et T. Draghici \cite{bib:ApostolovDrag} have considered  the condition on a positive compatible almost complex structure  on a   symplectic manifold $(M,\omega)$   that its Chern Ricci form $\Cric$ be proportional to $\omega$. Della Vedova called these $J$'s {\it {special}} and 
studied  in \cite{bib:DellaVedova}  triples  $(M,\omega,J)$ with $J$ special in a homogeneous context. Further studies by Alekseevsky and Podest\`a appear in \cite{bib:AlekseevskyPodesta}. \\

\section{Variational principles} \label{section:variational principles}

In the introduction, we reviewed the variational principle built on the space $\mathcal{J}(M,\omega)$ of positive compatible almost complex structures on a  symplectic manifold $(M,\omega)$ by integrating the square of the norm of the Nijenhuis tensor.
We also mentioned that the square of the norm was the only invariant polynomial of degree $\le 2$ in $N^J$.\\

Other variational principles  on the space $\mathcal{J}(M,\omega)$ have been defined using invariant built from the curvature of a natural connection.

 D. Blair and S. Ianus \cite{bib:BlairIanus} studied the functional $\mathcal{F}_1$ which associates to a $J\in \mathcal{J}(M,\omega) $  the integral of the scalar curvature $\scal^{g_J}$ of the Levi Civita connection associated to the metric $g_J$:
$$
\mathcal{F}_1(J)=\int_M  \scal^{g_J} \frac{\omega^N}{n!}.
$$
This can be seen as  the restriction of the Hilbert functional, defined on the space of metrics on $M$, to the space of metrics built from positive compatible almost complex structures. Extrema of the Hilbert functional are the Einstein metrics.
The extrema of $\mathcal{F}_1$ are those $J\in \mathcal{J}(M,\omega) $ such that the Ricci tensor of the Levi Civita connection associated to $g_J$ is $J$-hermitian : $Ric^{g^J}(JX,JY)=Ric^{g^J}(X,Y)$. 
Examples of non K\"ahlerian triples $(M,\omega,J)$ satisfying this condition
have been given in 1990 par Davidov et Muskarov \cite{bib:DavidovMusk} on some twistor spaces over Riemannian manifolds.

J. Keller et M. Lejmi have  studied  \cite{bib:KellerLejmi}  the variational principle  whose functional is given by integrating the square of the Hermitian scalar curvature; they extend results concerning a lower bound for this integral obtained by S.K. Donaldson in the K\"ahler case, to the non K\"ahler case.\\

\section{Dimension and involutivity  of distributions associated  to $N^J$}\label{section:dimdistrib}

We now  look at properties of distributions associated to a triple $(M,\omega, J)$.

\begin{definition}\label{ImNJ} The {\it{Image distribution}} associated to an almost complex structure $J$ is 
\begin{equation}\label{def:ImNJ}
(\Image N^J)_x:=\Span\{ N^J_x(X,Y)\,\vert\, X,Y \in T_xM \}.
\end{equation}
Observe that $(\Image N^J)_x$ is invariant under $J_x$; when $J$ is compatible with a symplectic structure $\omega$ and positive, $(\Image N^J)_x$ is thus a symplectic subspace  of $T_xM$, ; one denotes by $(\Image N^J)^\perp_x$
its $\omega_x$-orthogonal complement (which coincides with its orthogonal complement relative to $g_J$ )and by $(\Image N^J)^\perp$ the corresponding distribution. \\
The {\it{Kernel distribution}} associated to  $J$ is
\begin{equation}\label{def:KerNJ}
(\Ker N^J)_x:=\{ X \in T_XM \,\vert\, N^J_x(X,Y)=0\quad \forall Y\in T_xM\}.
\end{equation}
Again  $(\Ker N^J)_x$ is invariant under $J_x$, hence for a triple $(M,\omega, J)$ it is a symplectic subspace  of $T_xM$.
\end{definition}
In general, those distributions do not have constant rank. 
Property \ref{prop:cyclicN} implies 
\begin{equation}\label{propKerIm}
(\Ker N^J)_x\subset (\Image N^J)^\perp_x.
\end{equation} 
Recall that any  almost complex structure $J$ on a $2$-dimensional  manifold is integrable.
\begin{prop}\label{prop:dim4}
For any  almost complex structure $J$ on a $4$-dimensional  manifold $M$, and for any point $x\in M$, one has 
$$\dim (\Image N^J)_x\le 2 .$$
\end{prop}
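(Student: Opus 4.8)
The plan is to use the pointwise algebraic structure of the Nijenhuis tensor together with the compatibility identity \eqref{prop:cyclicN}. Fix $x\in M$ and work in the $4$-dimensional symplectic vector space $(V,\Omega):=(T_xM,\omega_x)$ with its compatible complex structure $j:=J_x$; this makes $V$ a $2$-dimensional complex vector space $W$. By the Newlander--Nirenberg discussion and the isomorphism established in the proof of the irreducibility theorem, $N^J_x$ corresponds to an element $\tilde T$ of $\widetilde{\mathcal N}\cong \tilde V^*$, and $\widetilde{\mathcal N}$ is identified with a subspace of $W^*\otimes S^2 W^*$ modulo $S^3 W^*$. First I would simply compute the complex dimension of $\widetilde{\mathcal N}$ when $\dim_\C W = 2$: from $\dim(W^*\otimes S^2W^*)=2\cdot 3=6$ and $\dim S^3W^*=4$ we get $\dim_\C\widetilde{\mathcal N}=2$, i.e. $\dim_\R \mathcal N(V,\Omega,j)=4$. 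So the space in which $N^J_x$ lives is only $4$-dimensional, but this alone does not bound the rank of its image; the rank argument has to be made directly.

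The core of the argument is the following linear-algebra fact about a single $T\in\mathcal N(V,\Omega,j)$ with $\dim_\R V=4$. Because $T(jX,Y)=-jT(X,Y)$, the image $\Image T$ is a $j$-invariant (hence symplectic, hence even-dimensional) subspace of $V$, so $\dim\Image T\in\{0,2,4\}$. I would rule out $\dim\Image T=4$, equivalently $\Image T=V$. Suppose for contradiction $T$ is onto. Pick any nonzero $e_1\in W$ and complete to a complex basis $\{e_1,e_2\}$ of $W$ with $e_2:=j e_1'$-type partner; concretely, since $\dim_\C W=2$, fix $u\in V\setminus\{0\}$ and note that over $\C$ the tensor $\tilde T$ is a $\C$-trilinear form on $W$, antisymmetric in the first two slots, with vanishing cyclic sum. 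Antisymmetry in the first two slots of a bilinear-in-$(X,Y)$ form valued in $W^*$ when $\dim_\C W=2$ means $T(X,Y)=\Lambda(X,Y)\,\xi$ for a fixed $\xi\in W$, where $\Lambda\in\Lambda^2W^*$ is the (up to scale unique) area form and $\xi$ depends on nothing — wait, more precisely the $W$-valued alternating form $\Omega^2 W^*\otimes W$ has $\Lambda^2 W^*$ one-dimensional over $\C$, so $T = \Lambda\otimes\xi$ for some fixed $\xi\in W$. Hence $\Image_\C T = \C\xi$ is at most $1$-dimensional over $\C$, i.e. $\Image N^J_x$ is at most $2$-dimensional over $\R$. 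The cyclic identity \eqref{prop:cyclicN} is then automatically consistent (and in fact further constrains $\xi$), but is not even needed for the dimension bound — the decisive point is that $\Lambda^2$ of a $2$-dimensional space is $1$-dimensional.

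So the key steps, in order, are: (i) reduce to the fixed point $x$ and pass to the complex picture $W=(T_xM,J_x)$, noting $\dim_\C W=2$; (ii) observe that skew-symmetry in the first two arguments forces $N^J_x(X,Y)=\Lambda(X,Y)\,\xi$ for a fixed vector $\xi$ and the essentially unique complex area form $\Lambda$ on $W$, because $\dim_\C\Lambda^2 W^*=1$; (iii) conclude $\Image N^J_x=\C\xi$, which has real dimension $0$ or $2$, hence $\le 2$. The main obstacle is getting step (ii) stated cleanly: one must verify that a map $V\times V\to V$ which is $\R$-bilinear, alternating, and satisfies $T(jX,Y)=-jT(X,Y)$ is the same as a $\C$-bilinear alternating map $W\times W\to W$, and then invoke $\dim_\C\Lambda^2 W=1$; after that the proof is immediate, and the cyclic/compatibility condition is not required for the inequality (it would be needed only to decide between dimension $0$ and dimension $2$, which the proposition does not ask).
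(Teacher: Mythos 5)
Your argument is correct, and once unwrapped it is essentially the paper's own proof in more abstract clothing: the paper picks $X\neq 0$ and $Y\notin\Span\{X,J_xX\}$, so that $\{X,J_xX,Y,J_xY\}$ is a basis of $T_xM$, and uses $N^J(X,JX)=0$, $N^J(JX,Y)=N^J(X,JY)=-JN^J(X,Y)$, $N^J(JX,JY)=-N^J(X,Y)$ to conclude that every value lies in $\Span\{N^J_x(X,Y),J_xN^J_x(X,Y)\}$; your step (ii) is exactly this fact packaged as ``$\Lambda^2$ of a $2$-dimensional complex space is a line.'' One correction to the step you yourself flag as the main obstacle: the relation $T(jX,Y)=-jT(X,Y)$ makes $N^J_x$ \emph{conjugate}-linear, not $\C$-linear, in each slot, so it is not a $\C$-bilinear alternating map $W\times W\to W$; it is one $\overline{W}\times\overline{W}\to W$, where $\overline{W}$ denotes $V$ with the conjugate complex structure $-j$. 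Since $\dim_\C\overline{W}=2$ as well, the map still factors through the one-dimensional space $\Lambda^2_\C\overline{W}$, every element of which is decomposable, so $N^J_x=\Lambda\otimes\xi$ with $\Lambda$ a conjugate-bilinear area form and $\Image N^J_x=\C\xi$, of real dimension $0$ or $2$; the conclusion is unaffected, but the verification as you stated it would fail. Two smaller points: the proposition assumes only an almost complex structure, with no symplectic form, so the opening appeal to $(T_xM,\omega_x)$ and the cyclic identity should simply be dropped (as you correctly note, they are not needed for the bound); and the computation $\dim_\C\widetilde{\mathcal N}=2$ is right but, as you also note, irrelevant to bounding the rank.
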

\begin{proof} Given any non vanishing $X\in T_xM$ and any $Y\in T_xM$, not belonging to $\Span \{X,J_xX\}$,
 using property \ref{prop:JNJ} and the fact that $T_xM= \Span \{X,J_xX,Y,J_xY\}$, we have
$$
(\Image N^J)_x=\Span\{N^J_x(X,Y), J_xN^J_x(X,Y)\}.
$$
\end{proof}
Remark that if $J$ is compatible with a symplectic structure $\omega$, and if $X,Y$ are in $(\Image N^J)_x^\perp$ then, by property \ref{prop:cyclicN}, $N^J(X,Y)=0$.
Hence, for such a $J$ :  
\begin{equation}\label{Ker}
(\Ker N^J)_x=\{ X\in (\Image N^J)_x^\perp \,\vert \, N^J(X,A)=0\,\, \forall A \in (\Image N^J)_x\}.
\end{equation}
\begin{prop}
For any positive compatible almost complex structure $J$ on a $2n$-dimensional symplectic manifold $(M,\omega)$, and for any point $x\in M$, one has 
$$\dim (\Image N^J)_x=2 \qquad \Leftrightarrow\qquad  \dim (\Ker N^J)_x =2n-4.$$
\end{prop}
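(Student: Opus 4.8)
The plan is to work at the fixed point $x$, writing $I:=(\Image N^J)_x$ and $K:=(\Ker N^J)_x$, and to rely on three standing facts: $I$ and $K$ are $J_x$-invariant (hence symplectic) subspaces of $T_xM$; $K\subseteq I^\perp$ by \eqref{propKerIm}; and $N^J_x$ vanishes identically on $I^\perp\times I^\perp$ (the remark deduced from \eqref{prop:cyclicN} just before \eqref{Ker}). Throughout I regard $(T_xM,J_x)$ as a complex vector space $W$ of complex dimension $n$, and I note that by \eqref{prop:JNJ} the tensor $N^J_x$ is alternating and conjugate-linear in each of its two arguments, with values in the complex subspace $I$. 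Since $T_xM=I\oplus I^\perp$, the tensor $N^J_x$ is entirely determined by its three restrictions to $I\times I$, to $I\times I^\perp$, and to $I^\perp\times I^\perp$, the last of which is zero.

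For the implication $\dim I=2\Rightarrow\dim K=2n-4$ I would argue as follows. Here $\dim_{\C}I=1$, so $\Lambda^2$ of $I$ vanishes and the alternating conjugate-bilinear restriction $N^J_x|_{I\times I}$ is also zero; hence $N^J_x$ is determined by $\beta:=N^J_x|_{I\times I^\perp}\colon I\times I^\perp\to I$. Choosing a complex generator $e$ of $I\cong\C$, the map $\beta$ is encoded by a single conjugate-linear functional $\gamma\colon I^\perp\to\C$ via $\beta(e,X)=\gamma(X)\,e$. Using $K\subseteq I^\perp$, the vanishing of $N^J_x$ on $I^\perp\times I^\perp$, and conjugate-linearity of $N^J_x$ in its first slot, one checks that $X\in I^\perp$ lies in $K$ if and only if $\beta(e,X)=0$, i.e. $K=\Ker\gamma$. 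Since $\gamma\neq 0$ (otherwise $N^J_x\equiv 0$ and $\dim I=0$), $\gamma$ is a surjection onto $\C$, so $\dim_{\C}K=\dim_{\C}I^\perp-1=(n-1)-1=n-2$, that is $\dim_{\R}K=2n-4$.

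For the converse, assume $\dim_{\R}K=2n-4$. From $K\subseteq I^\perp$ one gets $\dim_{\R}I\le 4$, hence $\dim_{\R}I\in\{0,2,4\}$ by $J_x$-invariance. The case $\dim_{\R}I=0$ is impossible, since it would force $N^J_x\equiv 0$ and $K=T_xM$. If $\dim_{\R}I=4$, then $\dim_{\C}I^\perp=n-2=\dim_{\C}K$, so $K=I^\perp$; then $N^J_x(X,\cdot)=0$ for every $X\in I^\perp$, so $N^J_x$ vanishes as soon as one of its arguments lies in $I^\perp$, i.e. $N^J_x$ is supported on $I\times I$; expanding $N^J_x$ in a complex basis $e_1,e_2$ of $I$ and using that it is alternating and conjugate-bilinear shows every value $N^J_x(X,Y)$ is a scalar multiple of the single vector $N^J_x(e_1,e_2)$, whence $\dim_{\C}\Image N^J_x\le 1$, contradicting $\dim_{\C}I=2$. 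Therefore $\dim_{\R}I=2$.

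I expect the only genuine obstacle to be the exclusion of the case $\dim_{\R}I=4$ in the converse: this is the one place where the conjugate-bilinear structure of $N^J$ on $I\times I$ must really be exploited (namely, that an alternating conjugate-bilinear map out of a two-dimensional complex space has image contained in a complex line). The remainder is bookkeeping combining the inclusion $K\subseteq I^\perp$, the vanishing of $N^J$ on $I^\perp\times I^\perp$, and elementary complex-dimension counts.
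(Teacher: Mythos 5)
Your proof is correct and follows essentially the same route as the paper's: your forward implication identifies the kernel as the kernel of a nonzero conjugate-linear functional on $(\Image N^J)_x^\perp$, which is just the complex repackaging of the paper's pair of real forms $\alpha$ and $\alpha\circ J$, and your exclusion of $\dim(\Image N^J)_x=4$ in the converse rests on the same mechanism (an alternating, $J$-antilinear tensor on a two-complex-dimensional space has image in a single complex line) that the paper invokes by applying Proposition~\ref{prop:dim4} to a $J$-invariant complement of the kernel. The only difference is organizational: you run the converse as a case analysis on $\dim(\Image N^J)_x\in\{0,2,4\}$ instead of bounding the image directly.
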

\begin{proof}
If $\dim (\Image N^J)_x=2 $, say that $(\Image N^J)_x$ is spanned by $A$ and $JA$. Then, using properties \ref{Ker} and \ref{prop:JNJ}
$(\Ker N^J)_x=\{ X\in  <A,JA>^\perp \,\vert\,  N^J(X,A)=0  \}$.
Writing $N^J(X,A)=\alpha(X)A + \beta(X) JA$, with $\alpha$ and $\beta$ linear forms on  $<A,JA>^\perp$, property \ref{prop:JNJ} implies that $\beta(X)=\alpha (JX)$, hence $\alpha$ is not identically zero. Thus the kernel is given by
$(\Ker N^J)_x=\{ X\in  <A,JA>^\perp \,\vert\,  \alpha(X)=0 \quad \alpha\circ J (X)=0  \}$. Hence $\dim (\Ker N^J)_x =2n-4.$\\
Reciprocally, if $\dim (\Ker N^J)_x =2n-4$, then $T_xM=(\Ker N^J)_x \oplus <X,Y,JX,JY>$ and $(\Image N^J)_x$ is spanned by $N^J(X,Y)$ and $JN^J(X,Y)$
as in proposition \ref{prop:dim4}. If one of these vectors (and hence both) would vanish, $N^J$ would vanish and $(\Ker N^J)_x$ would be of dimension $2n$,
hence $\dim (\Image N^J)_x=2 $.
\end{proof}
Proposition \ref{prop:dim4}  and the $2$-dimensional case  are the only limitations to the dimension of $(\Image N^J)_x$ :
we shall give examples of symplectic manifolds endowed with positive compatible almost complex structures such that 
the image distribution is of constant rank in any possible even dimension up to the dimension of the manifold, except that in dimension $4$, the dimension of the image is at most $2$. 
The examples given here are symplectic groups endowed with invariant almost complex structures.\\

A symplectic group is a Lie group $G$ equipped with a left invariant symplectic structure $\omega$;
 this  is equivalent to the data of a non-degenerate $2$-cocycle $\Omega$ on the Lie algebra $\g$ of $G$, i.e. a non degenerate skewsymmetric bilinear map $\Omega : \g\times \g \rightarrow \R$ such that
\begin{equation}\label{cocycle}
\cyclic_{X,Y,Z} \Omega([X,Y],Z)=0.
\end{equation}
A left invariant compatible positive almost complex structure $J$ on the group $G$ then corresponds to a linear map $j: \g\rightarrow \g$ such that $\Omega(jX,jY)=\Omega(X,Y)$ and $\Omega(X,jX)>0$ for any $X\neq 0$; we call such a map a positive compatible $j$.\\
A review about symplectic groups is given in \cite{bib:Trale}. 
A nice way to produce a compact symplectic manifold is to look at the quotient of a  symplectic group by a lattice (a discrete subgroup acting cocompactly).
Malcev's criterion  says that a simply connected nilpotent Lie group admits a lattice if and only if its Lie algebra admits a basis with rational structure constants.\\

Examples of symplectic Lie algebras $(\g,\Omega)$ of dim $4$ and compatible positive $j$'s, with corresponding left invariant distributions $\Image N^J$ involutive or not and independently $(\Image N^J)^\perp$ involutive or not, are given below.\\
We assume in all cases to have a basis of $\g$  given by $\{X_1,X_2,Y_1,Y_2\}$ for which  $\Omega=\sum_{i =1}^2X^*_i\wedge Y^*_i$ and  $j(X_i)=Y_i$ for $i=1,2$.
We describe now four symplectic algebras by giving the non vanishing Lie brackets of the basis elements:
\noindent \begin{align*}
{\bf\textrm{Ex 1 :}}&[X_1,X_2]=Y_2, \, [X_1,Y_2]=Y_1 .   & \Image N^J=<X_1-Y_2,X_2+Y_1> \cr
&& \textrm{ is non involutive } \cr
&& (\Image N^J)^\perp=<X_2-Y_1,X_1+Y_2>\cr
 &&\textrm{is  non involutive } \cr
{\bf\textrm{Ex 2 :}}&[Y_1,Y_2]=X_2.    & \Image N^J=<X_2,Y_2 > \cr 
&&\textrm{is involutive } \cr
&& (\Image N^J)^\perp=<X_1,Y_1>\cr
&&\textrm{is involutive } \cr
{\bf\textrm{Ex 3 :}}&[X_1,X_2]=\half(X_2+Y_2), \, [X_1,Y_1]=Y_1    & \Image N^J=<Y_2,X_2>\cr
  &&\textrm{is non involutive } \cr
&[X_1,Y_2]=\half Y_2, \, [X_2,Y_2]=Y_1.& (\Image N^J)^\perp=<Y_1,X_1> \cr
&&\textrm{is  involutive } \cr
{\bf\textrm{Ex 4 :}}&[X_1,X_2]=-X_2+2Y_1+4Y_2,      & \Image N^J=<X_1+2X_2,Y_1+2Y_2>\cr
  &&\textrm{ is  involutive } \cr
&[X_1,Y_1]=-Y_1,\, [X_1,Y_2]=Y_1+ Y_2. & (\Image N^J)^\perp=<2X_1-X_2,2Y_1-Y_2>\cr
&&\textrm{ is non involutive } \cr
 \end{align*}
Remark that the Lie algebras in examples 1 and 2 are nilpotent and the structure constants in the given basis are integers; they are solvable in examples 3 and 4.\\

We give an example of a nilpotent symplectic Lie algebra   $\g$ of dimension $6$ with a compatible positive $j$ such that $\Image N^J$ is of dimension  $6$.
Let  $\{X_1,X_2,X_3,Y_1,Y_2,Y_3\}$ be a basis of $\g$,  set $\Omega=\sum_{i =1}^3X^*_i\wedge Y^*_i $,  $ j(X_i)=Y_i$ (for $i \le 3$) and the only non vanishing brackets of the Lie algebra are given by:
$$[X_1,X_2]=X_2+X_3,\quad [X_1,X_3]=-X_2-X_3+Y_2,\quad  [X_1,Y_2]=-Y_2+Y_3,$$
$$[X_1,Y_3]=-Y_2+Y_3,  [X_2,X_3]=Y_1.$$
Then  $N^J(X_1,X_2)=2X_3,\,\, N^J(X_1,X_3)=2X_3-Y_2,\,\,N^J(X_2,X_3)=-Y_1,$ hence $\Image N^J=\g$. Remark that the structure constants in the given basis are integer numbers, so Malcev's criterion applies and there exists a cocompact lattice in $\g$.\\

To construct symplectic groups  with invariant $J$'s, with $\Image N^J$ involutive or not (of any possible dimension) and independently $(\Image N^J)^\perp$ involutive or not, we proceed now by induction at the level of Lie algebras.
We use constructions (which are particular cases of constructions given by Medina and Revoy in \cite{bib:MedinaRevoy}) to build, from a symplectic Lie algebra $(\g,\Omega)$ of dimension $2m$, with a positive compatible $j$, and with $\g$ non perfect (i.e. the derived algebra $[\g,\g]$ not equal to $\g$) a new non perfect symplectic Lie algebra  of dimension $2m+2$ and a positive compatible  almost complex structure on this new algebra, not changing involutivity properties of the corresponding $\Image N^J$ and $(\Image N^J)^\perp$.\\

The first construction increases the dimension of $(\Image N^{J})^\perp$; at the level of groups, it is just the direct product with the $2$- dimensional  K\"ahlerian group ; at the level of algebras:
$$\widetilde{\mathfrak{g}}=\mathfrak{g}\oplus \R^{2},\quad \widetilde{\Omega}=\Omega\oplus \Omega_{std},\quad \widetilde{j}=j\oplus j_{std}$$
so that $\Image N^{\widetilde{J}}=\Image N^J, \quad (\Image N^{\widetilde{J}})^\perp=(\Image N^{J})^\perp\oplus \R^{2}.$\\

The second construction increases the dimension of $\Image N^{J}$: at the level of algebras, as vector spaces it is again a direct sum with $(\R^2,\Omega_{std},j_{std}) $ but the definition of the new brackets involves the choice of a non zero character $\xi : \mathfrak{g} \rightarrow \R$ of the Lie algebra $\g$ (which exists since $\g$ is non perfect). Precisely

$${\mathfrak{g}}'=\mathfrak{g}+ \R c+ \R d,\quad \begin{array}{l} {\Omega}'=\Omega\oplus \Omega_{std}\cr i.e.\,  \Omega'(c,d)=1\end{array},\quad \begin{array}{l} j'=j\oplus j_{std}\cr i.e.\,  j'(c)=d  \end{array},$$
with brackets in $\g'$ given by 
$$ [u,v]'=[u,v],\quad [u,c]'=-\xi(u)d,  \quad[u,d]'=0,  \quad [c,d]'=0  \quad \textrm{ for any } u,v\in \g. $$
Hence $\Image N^{J'}=\Image N^J+\R c+\R d,$ and $(\Image N^{J'})^\perp=(\Image N^{J})^\perp\subset \mathfrak{g}.$\\

We have thus proven
\begin{prop}
In  any dimension $2n$ ($n\ge 2$), and for any $k\le n$ (assuming $k<n$ if $n=2$), there exists a  symplectic Lie group of dimension $2n$ with a positive invariant compatible almost complex structure $J$ such that the distribution $\dim\Image N^{J}$ has constant rank equal to $2k$ and such that the two distributions  $\Image N^{J},(\Image N^{J})^\perp$ have any of the four involutivity properties possible (both involutive, or both non involutive, or one involutive and the other not) when $0<k<n$. 
\end{prop}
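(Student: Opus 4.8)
The plan is to prove the proposition by induction on $n$, using the two constructions just described as the inductive step, and handling the base cases $n=2$ and the "maximally non integrable" seed examples explicitly. The scheme is as follows. For a given target dimension $2n$ and target rank $2k$ of $\Image N^J$, I first want to produce, in dimension $2k$, a symplectic Lie algebra $(\mathfrak{g}_0,\Omega_0)$ with positive compatible $j_0$ which is \emph{maximally non integrable}, i.e. $\Image N^{J_0}=\mathfrak{g}_0$, and which moreover realizes a prescribed involutivity behaviour for $\Image N^{J_0}$ (here trivially the whole algebra, so involutive iff $\mathfrak{g}_0$ is a subalgebra of itself, which it is — so in the seed the distribution $\Image N^{J_0}$ is automatically involutive, being all of $\mathfrak{g}_0$, while $(\Image N^{J_0})^\perp=0$ is also trivially involutive). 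To get the non-involutive cases for $\Image N^J$ or for $(\Image N^J)^\perp$, I instead seed with one of the four explicit $4$-dimensional examples Ex 1--Ex 4 above, which exhibit all four involutivity combinations in rank $2$, and then pad up. So the key point is: the four $4$-dimensional examples give the four involutivity patterns at $(n,k)=(2,2)$ minus the excluded case, and the $6$-dimensional nilpotent example gives a maximally-non-integrable seed in rank $6$; for general rank $2k$ I need a maximally non integrable seed in each even dimension $2k\ge 4$ (and $2k=2$ is vacuous since $J$ is always integrable in dimension $2$, matching the excluded case), which I obtain by a minor variant of the $6$-dimensional construction or, more cleanly, by applying the second construction $k-2$ times to one of Ex 1/Ex 2.

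Next, given such a seed $(\mathfrak{g}_0,\Omega_0,j_0)$ of dimension $2k$ with $\Image N^{J_0}=\mathfrak{g}_0$ and the desired involutivity flags, I apply the first construction (direct sum with the $2$-dimensional Kählerian factor $(\R^2,\Omega_{std},j_{std})$) a total of $n-k$ times. The key verifications are exactly the two formulas stated just before the proposition: after the first construction $\Image N^{\widetilde J}=\Image N^J$ and $(\Image N^{\widetilde J})^\perp=(\Image N^J)^\perp\oplus\R^2$; so the rank of $\Image N$ is unchanged (stays $2k$), the ambient dimension grows by $2$ each time (reaching $2n$ after $n-k$ steps), and involutivity of $\Image N^{\widetilde J}$ is literally inherited, while $(\Image N^{\widetilde J})^\perp$ acquires an abelian $\R^2$ summand which is a subalgebra, so $(\Image N^{\widetilde J})^\perp$ is involutive iff $(\Image N^J)^\perp$ was. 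Thus both involutivity flags are preserved. One must also check that $\widetilde\Omega$ is still a non-degenerate cocycle and $\widetilde j$ still positive compatible — these are immediate since everything splits as an orthogonal direct sum and $(\R^2,\Omega_{std},j_{std})$ is the standard Kähler plane. One should note $\widetilde{\mathfrak g}$ remains non perfect (direct sum with abelian), which is needed if one wants to keep iterating, though here only the first construction is iterated in this stage.

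The main obstacle, and the step deserving real care, is the base layer: producing for every even $2k$ with $4\le 2k\le 2n$ a symplectic Lie algebra with positive compatible $j$ that is maximally non integrable \emph{and} independently carries each of the four involutivity patterns for the pair $(\Image N^J,(\Image N^J)^\perp)$ — except when $k=n=2$, where the fully maximal case is impossible. My approach to this is to start from the four explicit $4$-dimensional examples (which already settle all patterns for the pair at rank $2$) and apply the \emph{second} construction repeatedly: each application adds $\R c+\R d$ to $\Image N^J$ (increasing its rank by $2$) while leaving $(\Image N^J)^\perp\subset\mathfrak g$ and, crucially, leaving its involutivity unchanged, since $(\Image N^{J'})^\perp=(\Image N^J)^\perp$ as a subspace of $\mathfrak g$ and the bracket on $\mathfrak g$ is unchanged by the construction. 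Involutivity of $\Image N^{J'}=\Image N^J+\R c+\R d$: from the bracket formulas $[u,c]'=-\xi(u)d$, $[u,d]'=0$, $[c,d]'=0$, one sees $[\Image N^{J'},\Image N^{J'}]'\subset [\Image N^J,\Image N^J]+\R d\subset \Image N^{J'}$ precisely when $\Image N^J$ was involutive (and one can arrange non-involutivity to persist by choosing the seed and $\xi$ appropriately), so the involutivity pattern is again transported. Iterating the second construction from Ex 1 and Ex 2 as the two seeds then yields, in every even dimension $2k\ge 4$, maximally non integrable examples with $\Image N^J$ involutive (from Ex 2) or non involutive (from Ex 1). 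Finally, assembling: given $(n,k)$ with $0<k<n$ (or $0<k\le n$, $k<n$ if $n=2$), take the appropriate rank-$2k$ seed from this base layer with the prescribed pair of involutivity flags, then apply the first construction $n-k$ times to reach dimension $2n$ without disturbing the rank or the flags. The case $k=0$ is the Kähler case (take an abelian algebra), trivially handled. This completes the construction in all required cases.
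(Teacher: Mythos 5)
Your overall strategy is the paper's: seed with the four explicit $4$-dimensional examples to realize all involutivity patterns at rank $2$, apply the second construction $k-1$ times to raise the rank to $2k$, then the first construction $n-k-1$ times to reach dimension $2n$, checking that both constructions transport the two involutivity flags. Your verification of that transport is correct, and in fact slightly better than you claim: non-involutivity of $\Image N^{J}$ automatically persists under the second construction, because $\mathfrak{g}'=\mathfrak{g}\oplus\R c\oplus\R d$ as vector spaces, so a bracket $[u,v]\in\mathfrak{g}\setminus\Image N^{J}$ with $u,v\in\Image N^{J}$ remains outside $\Image N^{J}+\R c+\R d$; no special choice of seed or of $\xi$ is needed.

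There is, however, one concrete error, located in your base layer for the maximally non integrable case $k=n$. You propose to obtain a maximally non integrable seed in dimension $2k$ ``more cleanly, by applying the second construction $k-2$ times to one of Ex 1/Ex 2.'' This cannot work: the second construction adds $\R c\oplus\R d$ simultaneously to the algebra and to $\Image N^{J}$, so it preserves the \emph{codimension} of the image distribution. Ex 1 and Ex 2 have $\dim\mathfrak{g}-\dim\Image N^{J}=2$, and this deficit remains $2$ after every iteration, so you never reach $\Image N^{J'}=\mathfrak{g}'$. (This is consistent with the paper's Proposition bounding $\dim(\Image N^J)_x$ by $2$ in dimension $4$: no $4$-dimensional seed of full rank exists to start from.) The correct seed for $k=n\ge 3$ is the explicit $6$-dimensional nilpotent example with $\Image N^{J}=\mathfrak{g}$ given in the paper; applying the second construction to \emph{it} $n-3$ times produces maximally non integrable examples in every dimension $2n\ge 6$. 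For this iteration you should also record that the second construction preserves non-perfectness (indeed $c\notin[\mathfrak{g}',\mathfrak{g}']=[\mathfrak{g},\mathfrak{g}]+\R d$), so a nonzero character exists at each stage; the same remark applies to your rank-raising stage from Ex 1--Ex 4. Since for $k=n$ both $\Image N^{J}=\mathfrak{g}$ and its zero orthogonal complement are trivially involutive, no involutivity bookkeeping is needed there, and with this substitution your argument is complete.
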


Remark that in both constructions, starting from a nilpotent Lie algebra, we get a new nilpotent Lie algebra. 
In view of Malcev's criterion, this immediately leads to
\begin{prop}
In  any dimension $2n$ ($n\ge 2$), and for any $k\le n$ (assuming $k<n$ if $n=2$), there exists a compact symplectic nilmanifold of dimension $2n$ with a positive compatible almost complex structure $J$ such that the distribution $\dim\Image N^{J}$ has constant rank equal to $2k$ and such that the two distributions  $\Image N^{J},(\Image N^{J})^\perp$ are either both involutive, either both non involutive (this possibility of course only when $0<k<n$). 
\end{prop}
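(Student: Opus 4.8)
The plan is to deduce this from the preceding proposition together with Malcev's criterion, by checking that the whole construction producing those symplectic Lie groups can be carried out inside the class of nilpotent Lie algebras admitting a basis with rational structure constants. First I would record the base cases. For $k=0$ one takes the flat torus $\R^{2n}/\mathbb{Z}^{2n}$ with its standard $\Omega$ and $j$, for which $N^J\equiv 0$, so $\Image N^J=\{0\}$ and both $\Image N^J$ and $(\Image N^J)^\perp$ are trivially involutive. For $k=1$ one takes the four-dimensional nilpotent algebras Ex~1 (both distributions non involutive) and Ex~2 (both distributions involutive), whose structure constants in the chosen basis are integers. For $k=n$, which forces $n\ge 3$, one takes the explicit six-dimensional nilpotent algebra displayed above, again with integer structure constants, for which $\Image N^J=\g$ and hence $(\Image N^J)^\perp=\{0\}$, both involutive. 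The mixed involutivity patterns are not asserted here because the four-dimensional examples realising them, Ex~3 and Ex~4, are solvable but not nilpotent.

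Next I would verify that the two inductive constructions preserve nilpotency and rationality. For $\widetilde\g=\g\oplus\R^2$ this is immediate, as one only adds an abelian summand, and $\Image N^{\widetilde{J}}=\Image N^J$ while $(\Image N^{\widetilde{J}})^\perp=(\Image N^J)^\perp\oplus\R^2$, so the involutivity type of the pair is unchanged. For $\g'=\g+\R c+\R d$ one has $[\g',\g']=[\g,\g]+\R d$, and since the character $\xi:\g\to\R$ annihilates $[\g,\g]$ the lower central series of $\g'$ coincides with that of $\g$ from the second step on, so $\g'$ is again nilpotent; moreover $\xi$ can be chosen rational on a rational basis, since a nilpotent $\g$ has $[\g,\g]\subsetneq\g$ with $[\g,\g]$ a rational subspace, hence carrying a nonzero rational annihilating functional, and the new structure constants are the old rational ones together with the numbers $\xi(X_i)$. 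Here $\Image N^{J'}=\Image N^J+\R c+\R d$ and $(\Image N^{J'})^\perp=(\Image N^J)^\perp$, and, as already observed in the description of the construction, the involutivity type of each of the two distributions is preserved.

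With these ingredients the argument is bookkeeping. For $0<k<n$ one applies the second construction $k-1$ times and then the first construction $n-k-1$ times (both exponents $\ge 0$ since $1\le k\le n-1$), starting from Ex~2 for the ``both involutive'' case and from Ex~1 for the ``both non involutive'' case; the result is a nilpotent symplectic Lie algebra of dimension $2n$ with rational structure constants whose distribution $\Image N^J$ has constant rank $2k$ and the prescribed involutivity type. For $k=n$ one applies the second construction $n-3$ times to the six-dimensional example, and $k=0$ is the torus. In every case Malcev's criterion yields a lattice $\Gamma$ in the associated simply connected nilpotent Lie group $G$, and $M=\Gamma\backslash G$, equipped with the symplectic form and the positive compatible almost complex structure descending from the left invariant ones on $G$, is the required compact nilmanifold; rank and involutivity, being local and invariant, transfer from $G$ to $M$. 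The only delicate points — such as they are — are the rational choice of $\xi$ in the second construction and the stability of the involutivity type of the pair $(\Image N^J,(\Image N^J)^\perp)$ under each step; everything else is formal.
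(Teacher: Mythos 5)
Your proof is correct and follows essentially the same route as the paper, which simply observes that both inductive constructions preserve nilpotency and then invokes Malcev's criterion. You have merely filled in the details the paper leaves implicit (the rational choice of the character $\xi$, the identification $[\g',\g']=[\g,\g]+\R d$ showing $\g'$ stays nilpotent, and the bookkeeping with the nilpotent base cases Ex~1, Ex~2, the torus and the six-dimensional example), all of which check out.
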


\begin{definition}
We say that an almost complex structure is {\emph{ maximally non integrable }} if the image of its Nijenhuis tensor is the whole tangent space at every point.
\end{definition} 

Remark that this is an open condition in the space of compatible almost complex structures.
 From the above, a compatible  almost complex structure on a symplectic manifold of dimension $4$ is never maximally non integrable, but there are examples of invariant almost complex structures on symplectic groups which are maximally non integrable in any dimension $\ge 6$.
\section{Parallel distributions and maximally non integrable almost complex structures} \label{section:parallelimage}

\begin{definition}
A distribution $\mathcal{D}$ on a manifold $M$ endowed with a linear connection $\nabla$ is said to be {\emph{parallel}} with respect to $\nabla$  if the covariant derivative of a vector field $Y$ with values in $\mathcal{D}$, $ Y\in \Gamma(M,\mathcal{D})$, in the direction of any vector field $X$ , is again a vector field with values in $\mathcal{D}$:
$$
(\nabla_XY)(p)\in  \mathcal{D}_p\qquad \forall p\in M, \forall X\in \chi(M), \forall Y\in \Gamma(M,\mathcal{D}). 
$$ 
\end{definition} 

\begin{definition}
Given a compatible almost complex structure $J$ on a  symplectic manifold $(M,\omega)$, we shall say that it has {\emph{almost parallel Nijenhuis tensor}} if the image distribution of the Nijenhuis tensor, $ \Image N^{J} $, is parallel
with respect to the Levi Civita connection associated to the metric $g_J$. Remark that this is the case if and only if $(\Image N^{J})^\perp$ is parallel; the Levi Civita connection being torsionless, it implies that both those distributions are involutive.
\end{definition}

Recall that  an almost complex structure is maximally non integrable if the image of its Nijenhuis tensor is the whole tangent space at every point.

\begin{prop}
A positive compatible almost complex structure $J$ on a  symplectic manifold $(M,\omega)$ has almost parallel Nijenhuis tensor if and only if the manifold is locally a product of a K\"alher manifold and a symplectic manifold with a compatible almost complex structure which is maximally non integrable.
When  the manifold $M$ is connected, simply connected, and the connection is complete, we have globally
$$
(M,\omega,J)=(M_1,\omega_1,J_1)\times (M_2,\omega_2,J_2)
$$
with $(M_1,\omega_1,J_1)$ K\"ahler and $J_2$ maximally non integrable.
\end{prop}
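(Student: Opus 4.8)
The plan is to establish the local splitting first, then bootstrap to the global statement using the de Rham decomposition theorem. Assume $J$ has almost parallel Nijenhuis tensor, so $\mathcal D:=\Image N^J$ is parallel with respect to $\nabla^{g_J}$; since $J$ commutes with parallel transport is \emph{not} automatic, but $\mathcal D$ is $J$-stable pointwise and parallel, and $\mathcal D^\perp$ is likewise parallel (orthogonal complement of a parallel subbundle), so by the local de Rham theorem each point has a neighbourhood isometric to a Riemannian product $(U_1\times U_2, g_1\oplus g_2)$ with $TU_1=\mathcal D^\perp$, $TU_2=\mathcal D$. The first task is to check that on such a product both $\omega$ and $J$ split as $\omega_1\oplus\omega_2$, $J_1\oplus J_2$. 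For $J$ this follows because $\mathcal D$ and $\mathcal D^\perp$ are $J$-invariant; for $\omega=g_J(J\cdot,\cdot)$ it then follows formally since $g_J$ and $J$ both split, and $d\omega=0$ passes to each factor. One must also confirm $d\omega_i=0$ on each slice, which is immediate from $d\omega=0$ and the product structure. So $(U,\omega,J)=(U_1,\omega_1,J_1)\times(U_2,\omega_2,J_2)$ as almost Kähler manifolds.

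Next I would identify $N^J$ on the product: since the Nijenhuis tensor only involves brackets of vector fields tangent to each factor (a vector field tangent to $U_1$ and one tangent to $U_2$ commute, and $J$ preserves the splitting), one gets $N^J=N^{J_1}\oplus N^{J_2}$ and hence $\Image N^J=\Image N^{J_1}\oplus\Image N^{J_2}$. But by construction $\Image N^J=\mathcal D=TU_2$, so $\Image N^{J_1}=0$ and $\Image N^{J_2}=TU_2$. By the Newlander--Nirenberg theorem $N^{J_1}=0$ means $J_1$ is integrable, and together with $d\omega_1=0$ this makes $(U_1,\omega_1,J_1)$ Kähler (using the equivalence recalled in the introduction, via equation \eqref{eq:dJN}); and $\Image N^{J_2}$ being the whole tangent bundle of $U_2$ is exactly the definition of $J_2$ maximally non integrable. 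The converse direction is easier: on a product of a Kähler manifold and a maximally non integrable almost Kähler manifold, the same computation gives $N^J=0\oplus N^{J_2}$ with $\Image N^{J_2}=TM_2$, which is manifestly a parallel subbundle of $TM$ (it is the tangent bundle of a de Rham factor, hence parallel for the product Levi Civita connection), so $J$ has almost parallel Nijenhuis tensor.

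For the global statement, assume $M$ connected, simply connected, and $\nabla^{g_J}$ complete. The parallel orthogonal splitting $TM=\mathcal D^\perp\oplus\mathcal D$ is then a genuine de Rham decomposition, and the global de Rham theorem gives an isometry $(M,g_J)\cong(M_1,g_1)\times(M_2,g_2)$ with $TM_1=\mathcal D^\perp$, $TM_2=\mathcal D$. The same local arguments, now applied globally, show $\omega$ and $J$ split, giving $(M,\omega,J)=(M_1,\omega_1,J_1)\times(M_2,\omega_2,J_2)$ with $(M_1,\omega_1,J_1)$ Kähler and $J_2$ maximally non integrable.

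The main obstacle is not any single computation but the bookkeeping at the juncture of the three structures: one must be careful that the de Rham splitting of $g_J$ is compatible with the splitting of $J$ and of $\omega$ simultaneously. The key point making this work is that $\mathcal D$ is at once parallel (so it is a de Rham factor) \emph{and} $J$-stable pointwise (so $J$ restricts to each factor), and these two facts together force everything else; verifying that $J$ restricted to each factor is again parallel-free of the cross terms, i.e. that $(\nabla^{g_J}_XJ)Y$ has no component mixing the factors, is the one place where one genuinely uses that $\mathcal D$ is parallel rather than merely involutive, and this should be checked explicitly via equation \eqref{eq:NnablaJ}.
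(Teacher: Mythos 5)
Your proof is correct and takes essentially the same route as the paper's, which simply observes that $\Image N^J$ and $(\Image N^J)^\perp$ are parallel, orthogonal, supplementary, $J$-stable distributions and invokes de Rham's decomposition theorem. The details you supply beyond that one-line argument --- in particular the verification via equation \eqref{eq:NnablaJ}, together with the vanishing of $N^J$ on $(\Image N^J)^\perp\times(\Image N^J)^\perp$, that $J$ (and hence $\omega$) has no cross-variation between the two factors --- are precisely the points the paper leaves implicit, and you identify correctly where parallelism, rather than mere involutivity, is used.
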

\begin{proof}
Given a $J$ on $(M,\omega)$ which has almost parallel Nijenhuis tensor, the Riemannian manifold $(M, g_J)$ is endowed with two parallel orthogonal, supplementary involutive distributions, $TM={\mathcal{D}}_1 \oplus {\mathcal{D}}_2$ with ${\mathcal{D}}_1= \Image N^{J}$ and $ {\mathcal{D}}_2=(\Image N^{J})^\perp= {\mathcal{D}}_1^{\perp_{g_J}}$.
The result then follows from de Rham's theorem \cite{bib:KN}. 
\end{proof}

Remark that parallelism of the Nijenhuis tensor with respect to the Levi Civita connection (i.e. $\nabla^{g_J}  N^J=0$) implies almost parallel Nijenhuis tensor, but in fact one has the following stronger property.
\begin{prop}
Given a positive compatible almost complex structure $J$ on a  symplectic manifold $(M,\omega)$, if the Nijenhuis tensor $N^J$ is parallel for the Levi Civita connection associated to $g_j$, then $J$ is integrable (i.e. $\nabla^{g_J}  N^J=0 \iff N^J=0$).
\end{prop}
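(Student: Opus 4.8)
The plan is to assume $\nabla^{g_J} N^J = 0$ and derive that $N^J$ vanishes identically. The key idea is to exploit the relation between $N^J$ and $\nabla^{g_J} J$ given in \eqref{eq:NnablaJ}, together with the fact that $\nabla^{g_J}$ is the Levi Civita connection of a \emph{Riemannian} metric, so that curvature-type identities and the compactness-free argument via holonomy or via pointwise algebra can be brought to bear. First I would observe that $\nabla^{g_J} N^J = 0$ implies, via \eqref{eq:NnablaJ} and $\nabla^{g_J} g_J = 0$, that $\nabla^{g_J}(\nabla^{g_J} J) = 0$ as well, i.e. the $(1,1)$-tensor field $S := \nabla^{g_J}_X J$ (for each fixed $X$, or better the full tensor $\nabla^{g_J} J \in \Gamma(T^*M \otimes \End TM)$) is parallel. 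In particular the function $\Vert N^J \Vert^2_{g_J}$ is constant on $M$ (this already follows from parallelism), so it suffices to show it vanishes at one point, or equivalently to rule out a nonzero parallel $\nabla^{g_J} J$.

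Next I would set up the pointwise algebra at a single point $x$. Write $A = \nabla^{g_J}_X J$ for a fixed unit vector $X$; from \eqref{eq:nablaJN} we have $\nabla^{g_J}_{JX} J = -J A$, and differentiating $J^2 = -\Id$ gives $AJ + JA = 0$, so $A$ anticommutes with $J$; moreover $A$ is $g_J$-skewsymmetric (since $\nabla^{g_J}_X J$ is skew, being the derivative of the skew-adjoint operator $J$ with respect to a metric connection). The crucial input is the Ricci/second Bianchi identity: since $\nabla^{g_J} J$ is parallel, the curvature operator $R^{g_J}(Y,Z)$ commutes with $J$ and with each $\nabla^{g_J}_W J$; applying $R^{g_J}(Y,Z)$ to the identity \eqref{eq:NnablaJ} and also using that parallelism of $\nabla^{g_J} J$ forces $R^{g_J}(Y,Z)\cdot (\nabla^{g_J} J) = 0$, one obtains strong algebraic constraints coupling $R^{g_J}$ and $N^J$. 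I would combine this with the standard almost-Kähler curvature identities (the ones underlying $\scal^C = \scal^{g_J} + 2\Vert N^J\Vert^2_{g_J}$ stated in Section~\ref{section:connections}) to get a contradiction unless $N^J = 0$.

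A cleaner route, which I would pursue in parallel, is via the de Rham-type splitting already proved in the preceding proposition: parallelism of $N^J$ implies \emph{almost parallel Nijenhuis tensor}, so locally $(M,\omega,J) = (M_1,\omega_1,J_1)\times(M_2,\omega_2,J_2)$ with $M_1$ Kähler and $J_2$ maximally non integrable, and on $M_2$ the tensor $N^{J_2}$ is still parallel and now \emph{surjective} at every point. So it suffices to prove: a maximally non integrable $J$ on a symplectic manifold cannot have $\nabla^{g_J} N^J = 0$. On $M_2$, parallelism means the holonomy group of $\nabla^{g_{J_2}}$ lies in the stabilizer of the tensor $N^{J_2}_x$ inside $U(T_x M_2)$; since $N^{J_2}_x$ spans $T_x M_2$, this stabilizer is a proper, in fact quite small, subgroup of the unitary group, while the holonomy of a generic (non-locally-symmetric) metric is large — more precisely, I would use that the Chern–Ricci form represents $4\pi c_1$ together with the identity $\scal^C = \scal^{g_J} + 2\Vert N^J\Vert^2$ and parallelism (which makes $\nabla^{g_J}$ have special holonomy and forces $\nabla^{g_J}$-parallelism of $\omega$ up to the $J$-anti-invariant correction) to derive that $J_2$ must in fact be integrable, contradicting maximal non-integrability unless $M_2$ is a point.

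The main obstacle I anticipate is the last step: converting "$N^J$ is parallel and surjective" into an outright contradiction. Parallelism gives abundant algebraic identities but one must be careful that they are not vacuously satisfiable by some exotic parallel structure (e.g. a symmetric space). The decisive point should be that a parallel $\nabla^{g_J} J$ together with $\nabla^{g_J} g_J = 0$ would make the Chern connection $\nabla^C$ (see \eqref{eq:Chernconnection}) differ from $\nabla^{g_J}$ by a parallel tensor, hence $\nabla^C$ would itself be a metric connection with parallel torsion $\tfrac14 N^J$; one then shows the torsion of such a connection with $\nabla^C\omega = 0$, $\nabla^C J = 0$, and $\nabla^C T^{\nabla^C} = 0$ must vanish, by tracing the first Bianchi identity for $\nabla^C$ against $\omega$ and $g_J$ and invoking the irreducibility Theorem of Section~\ref{section:decompositionofN} to conclude that the resulting linear condition on $N^J$ forces $N^J = 0$. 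I would present the argument in this order, leading with the reduction to the maximally non integrable case and closing with the Chern-connection torsion computation.
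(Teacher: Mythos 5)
Your plan never actually closes: each of the three routes you sketch (Ricci/Bianchi constraints on $R^{g_J}$, the de Rham splitting plus a holonomy argument, the Chern connection with parallel torsion) ends in an assertion --- ``one obtains strong algebraic constraints'', ``one then shows the torsion \dots must vanish'' --- rather than a computation, and the decisive step is missing in all of them. Two of the steps you do commit to are also problematic. First, $\nabla^{g_J}N^J=0$ does \emph{not} directly give $\nabla^{g_J}(\nabla^{g_J}J)=0$: differentiating \eqref{eq:NnablaJ} covariantly produces a term quadratic in $\nabla^{g_J}J$ (because $\omega$, equivalently $J$, is not $\nabla^{g_J}$-parallel outside the K\"ahler case), so parallelism of $N^J$ only controls $\nabla\nabla J$ up to $(\nabla J)\ast(\nabla J)$ terms. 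Second, the Chern-connection endgame cannot work as described: with $\nabla^C T^{\nabla^C}=0$ the first Bianchi identity reads $\cyclic R^C(X,Y)Z=\cyclic T(T(X,Y),Z)$, which is \emph{quadratic} in $N^J$; tracing it against $g_J$ or $\omega$ therefore yields no linear condition on $N^J$, and the irreducibility theorem of Section~\ref{section:decompositionofN} (which, as the paper notes, already forces every $U(n)$-equivariant linear catenation of $N^J$ to vanish identically, hence to carry no information) cannot be invoked to conclude $N^J=0$. The holonomy step (``the stabilizer of a surjective $N^J$ is small, generic holonomy is large'') is likewise not an argument for a specific metric.

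The missing idea is much more elementary and requires no curvature, splitting, or holonomy at all. Since $N^J(Y,JY)=0$ identically, expanding $0=\bigl(\nabla^{g_J}_X N^J\bigr)(Y,JY)$ leaves exactly one term, namely $N^J\bigl(Y,(\nabla^{g_J}_XJ)Y\bigr)=0$ for all $X,Y$. Then \eqref{eq:NnablaJ} gives
\begin{equation*}
2\,g_J\bigl((\nabla^{g_J}_XJ)Y,(\nabla^{g_J}_XJ)Y\bigr)
=\omega\bigl(N^J(Y,J(\nabla^{g_J}_XJ)Y),JX\bigr)
=-\omega\bigl(N^J(Y,(\nabla^{g_J}_XJ)Y),X\bigr)=0,
\end{equation*}
using $N^J(\cdot,J\cdot)=-JN^J(\cdot,\cdot)$ and the compatibility of $\omega$ with $J$. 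Positivity of $g_J$ then forces $\nabla^{g_J}J=0$, hence $N^J=0$ by \eqref{eq:NnablaJ} again. I would encourage you to look for such first-order, pointwise consequences of a parallelism hypothesis before reaching for curvature identities.
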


\begin{proof}
If $\nabla^{g_J}  N^J=0$, then $$0=\left(\nabla_X^{g_J}  N^J\right)(Y,JY)=N^J(Y,(\nabla^{g_J}_XJ)Y).$$
Using equation \eqref{eq:NnablaJ} ( $2\omega\left( (\nabla_X^{g_J}J)Y,Z\right)=\omega(N^J(Y,Z),JX)$) 
one has
\begin{eqnarray*}
2g_J((\nabla_X^{g_J}J)Y,(\nabla_X^{g_J}J)Y)&=&2\omega\left(\left(\nabla_X^{g_J}J\right)Y,J\left(\nabla_X^{g_J}J\right)Y\right)\\
&=&\omega\left(N^J\left(Y,J(\nabla_X^{g_J}J)Y\right),JX\right)\\
&=&-\omega(JN^J(Y,(\nabla_X^{g_J}J)Y),JX)=-\omega(N^J(Y,(\nabla_X^{g_J}J)Y),X)\\
&=& 0
\end{eqnarray*}
hence $\nabla_X^{g_J}J=0$ and so $N^J=0$.
\end{proof}
  \section{ Twistor space on the hyperbolic spaces} \label{section:twistor} 
  
We consider the manifold $\mathcal{T}$ which is the twistor space over the $2n$-dimensional hyperbolic space \begin{eqnarray*}
H_{2n}&=&\{(x^0,x^1,\ldots,x^{2n})\in \R^{1,2n}\,\vert\, -(x^0)^2+\sum_{i=1}^{2n}(x^i)^2=-1 \textrm{ and } x^0>0\}\\
&=& SO_0(1,2n)/SO(2n) \textrm{ where  }SO_{2n}=\{ \left( \begin{matrix}
1 & 0\\ 0 & B \end{matrix}\right)\in SO_0(1,2n)\};\end{eqnarray*}
it fibers over $H_{2n}$ and the fiber over the point $x$ consists of all almost complex structures on $T_xH_{2n}$ which are compatible with the Riemannian metric $g$ on $H_{2n}$ (induced by the Lorentzian scalar product on $\R^{1,2n}=\R e_0\oplus\R^{2n}$) and compatible with the orientation, i.e. such that there is an oriented orthonormal basis of $T_xH_{2n}$ in which it writes $j_0=\left( \begin{matrix}
0 & -I_n\\ I_n & 0 \end{matrix}\right)$. The fiber bundle $\mathcal{F}$ of oriented orthonormal frames on $H_{2n}$ can be identified with $SO_0(1,2n)\rightarrow H_{2n}$: the tangent space $T_xH_{2n}$ is identified with $\{y\in \R^{1,2n}\,\vert\, x^0y^0-\sum_{i=1}^{2n}x^iy^i=0\}$, and the element $A\in SO_0(1,2n)$ is identified with the frame
$\tilde{A}:=A\vert_{\R^{2n}}:\R^{2n}\rightarrow T_xH_{2n}$ at the point $x:=Ae_0$.

The group $SO_0(1,2n)$ acts on $\mathcal{T}$: for $j\in \End(T_xH_{2n}),A\cdot j:=A\circ j\circ A^{-1}\in \End(T_{Ax} H_{2n})$; this action is transitive and we have
$$\mathcal{T}=SO_0(1,2n)/U(n) \stackrel{\pi}{\longrightarrow} H_{2n}\textrm{ where  } U(n)=\{ C\in SO(2n)\,\vert\, Cj_0=j_0 C\}.$$
Let us denote by $\pi_1:\mathcal{F}=SO_0(1,2n) \rightarrow \mathcal{T}=SO_0(1,2n)/U(n)$ the natural projection.
The base point $t_0:=\pi_1(\Id)$  represents the almost complex structure on $T_{e_0}H_{2n}= \R^{2n}$ given by $j_0$. The tangent space $T_{t_0} \mathcal{T}$ is given by
\begin{equation}
T_{t_0} \mathcal{T}=\pi_{1*_{\Id}}\left\{ A\in so(1,2n)\,\vert\,A=\left( \begin{matrix}
0 & ^tu\\ u & B \end{matrix}\right) \textrm{ with } u\in \R^{2n}, B\in so(2n), Bj_0=-j_0 B \right\}
\end{equation}
The tangent bundle of the twistor space splits as the direct sum of the vertical bundle and the horizontal bundle 
$$T\mathcal{T}=\mathcal{V}+\mathcal{H}$$
 where $\mathcal{V}_t:=\Ker \pi_{*t}$ and $\mathcal{H}_{t=\pi_1(f)}$ is the projection  $\mathcal{H}_{t=\pi_1(f)}=\pi_{1*_f}H_f(\mathcal{F})$ in $T_t\mathcal{T}$,
of the horizontal space  $H_f(\mathcal{F}):=\Ker \alpha_f \subset T_f\mathcal{F}$ defined by the Levi Civita connection on $(H_{2n},g)$ viewed as a $so(2n)$-valued $1$-form on $\mathcal{F}$.\\
It is a $SO_0(1,2n)$-left invariant decomposition which at the base point is given by

\begin{eqnarray} 
T_{t_0} \mathcal{T}&=&\mathcal{V}_{t_0}\oplus \mathcal{H}_{t_0}\quad\textrm{ with }\quad \mathcal{V}_{t_0}= \pi_{1*_{\Id}}\mathfrak{q} \quad\textrm{ and }\mathcal{H}_{t_0}=\pi_{1*_{\Id}}\mathfrak{p}\quad\textrm{ where }\\
\mathfrak{q}:&=&\left\{A\in so(1,2n)\,\vert\,A=\left( \begin{matrix}
0 & 0\\ 0 & B \end{matrix}\right) \textrm{ with } B\in so(2n), Bj_0=-j_0 B \right\}, \label{eq:defq}\\
\mathfrak{p}:&=&\left\{
A\in so(1,2n)\,\vert\,A=\left( \begin{matrix}
0 & ^tu\\ u & 0 \end{matrix}\right) \textrm{ with }u\in \R^{2n} \right\}.\label{eq:defp}
\end{eqnarray} 
The twistor space  can be viewed as the orbit of $j'_0:=\left( \begin{matrix}
0 & 0\\ 0 & j_0 \end{matrix}\right) \in so(1,2n)$ under the adjoint action; as such it carries the left invariant Kirillov-Kostant-Souriau symplectic structure $\omega$ given at the base point by:
$$
\omega_{t_0}(\pi_{1*_{\Id}}A,\pi_{1*_{\Id}}A'):=-\Tr j'_0[A,A']
$$ so that 
$$\omega_{t_0}(\pi_{1*_{\Id}}\left( \begin{matrix}
0 & ^tu\\ u & B \end{matrix}\right),\pi_{1*_{\Id}}\left( \begin{matrix}
0 & ^tv\\ v & B' \end{matrix}\right))=2 u\cdot j_0 v -\Tr j_0 [B,B'].
$$
As a twistor space, it carries two natural almost complex structures $J^{\pm}$; those are the left invariant structures given on the $T_{t_0} \mathcal{T}$ by
\begin{equation}
J^{\pm}_{t_0}\left( \pi_{1*_{\Id}}\left(\begin{matrix}
0 & ^tu\\ u & B \end{matrix}\right)\right)=\left(\pi_{1*_{\Id}}\left( \begin{matrix}
0 & \pm ^tj_0u\\ \pm j_0u & j_0B \end{matrix}\right)\right).
\end{equation}
Observe that they are both compatible with $\omega$ and that $J^-$ is positive whereas $J^+$ is not.
\begin{lem}\label{lem:NJhomog}\cite{bib:DellaVedova}
Let $M$ be  a $G$-homogeneous  manifold  endowed with a $G$-invariant symplectic form $\omega$ and a $G$-invariant compatible almost complex structure $J$. We denote by $A^*$ the fundamental vector field on $M$ defined by an element $A\in \g$ (i.e. $A^*_x=\frac{d}{dt} \exp -tA\cdot x\vert_{t=0}$), we choose a base point $x_0$,  
and we choose a linear map  $J_0 :\g\rightarrow \g$  such that $J_{x_0}A^*=(J_0A)^*_{x_0}$ for all $A\in \g$. Then
\begin{equation}\label{eq:NJinv}
N^J_{x_0}(A^*,B^*)=\left(-N^{J_0}(A,B)\right)^*_{x_0}
\end{equation}
\begin{equation}
\textrm{ with } N^{J_0}(A,B):=[J_0A,J_0B] -J_0[J_0A,B]-J_0[A,J_0B]-[A,B] \quad \forall A,B \in\g.
\end{equation}
\end{lem}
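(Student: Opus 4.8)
The plan is to compute both sides of the claimed identity directly using the structure of a homogeneous space, exploiting the standard formula relating fundamental vector fields to Lie brackets. The key point is the sign-reversing relation $[A^*,B^*] = -[A,B]^*$, which holds for fundamental vector fields on a $G$-manifold (the map $A \mapsto A^*$ is a Lie algebra anti-homomorphism, given the convention $A^*_x = \frac{d}{dt}\exp(-tA)\cdot x|_{t=0}$ chosen in the statement). I would begin by recording this relation and noting that the flow of $A^*$ acts by the diffeomorphisms $\exp(-tA)$, so that the Lie bracket of two fundamental vector fields is again fundamental.

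Next, I would express the Nijenhuis tensor $N^J(A^*,B^*)$ by expanding the defining formula \eqref{prop:JNJ}-style expression $[JA^*,JB^*] - J[JA^*,B^*] - J[A^*,JB^*] - [A^*,B^*]$ and evaluating at the base point $x_0$. The main subtlety is that $J$ is only assumed $G$-invariant, not that $JA^*$ is globally a fundamental vector field; the hypothesis only gives $J_{x_0}A^* = (J_0A)^*_{x_0}$ at the single point $x_0$. So I must be careful: $JA^*$ need not equal $(J_0A)^*$ as vector fields. However, since $J$ is $G$-invariant and $A^*$ is defined via the $G$-action, one can check that $JA^*$ and $(J_0A)^*$ agree along the whole orbit, i.e. everywhere on $M$ since the action is transitive — this follows because $G$-invariance of $J$ means $J$ commutes with the pushforward by each $g\in G$, and both $JA^*$ and the fundamental field transform the same way. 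Once this globalization is established, every term in the expansion becomes a bracket of fundamental vector fields, hence fundamental, and I can apply $[A^*,B^*] = -[A,B]^*$ repeatedly.

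Carrying out the substitution term by term: $[JA^*,JB^*] = [(J_0A)^*,(J_0B)^*] = -[J_0A,J_0B]^*$; $J[JA^*,B^*] = J(-[J_0A,B]^*) = -(J_0[J_0A,B])^*$ at $x_0$ (using the pointwise identity again for the outer $J$); similarly $J[A^*,JB^*] = -(J_0[A,J_0B])^*$ at $x_0$; and $[A^*,B^*] = -[A,B]^*$. Assembling these gives
$$
N^J_{x_0}(A^*,B^*) = \left(-[J_0A,J_0B] + J_0[J_0A,B] + J_0[A,J_0B] + [A,B]\right)^*_{x_0} = \left(-N^{J_0}(A,B)\right)^*_{x_0},
$$
which is exactly \eqref{eq:NJinv}. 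The one genuine obstacle is the globalization step — making precise that $JA^*$ and $(J_0A)^*$ coincide as vector fields (or at least that all the relevant brackets can be computed as though they do). I expect this to require a short argument using transitivity and $G$-invariance; everything else is a bookkeeping computation with signs, and the lemma is in any case attributed to Della Vedova \cite{bib:DellaVedova}, so I would cite that reference for the details while sketching the computation above.
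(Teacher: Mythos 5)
Your strategy --- expand $N^J(A^*,B^*)$ from the definition and replace every vector field in sight by a fundamental one --- is not the paper's route, and it breaks down exactly at the step you flagged. The globalization claim is false: $G$-invariance of $J$ gives $J(g_*A^*)=g_*(JA^*)$, while fundamental fields transform by $g_*A^*=(\Ad(g)A)^*$; combining the two, at the point $g\cdot x_0$ one finds $J(A^*_{gx_0})=\bigl(\Ad(g)\,J_0\,\Ad(g)^{-1}A\bigr)^*_{gx_0}$, which coincides with $(J_0A)^*_{gx_0}$ only if $J_0$ is $\Ad(G)$-equivariant modulo the isotropy algebra --- something the hypotheses do not provide (for a reductive $G/H$, $J_0$ need only commute with $\Ad(H)$). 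So $JA^*$ and $(J_0A)^*$ agree at $x_0$ but not as vector fields, and since a Lie bracket sees first derivatives, $[JA^*,JB^*]_{x_0}$ and $[JA^*,B^*]_{x_0}$ differ from the brackets of the fundamental substitutes by correction terms of the form $-\bigl(\nabla_{\bullet}(JA^*-(J_0A)^*)\bigr)_{x_0}$, which your computation silently drops. There is a second, independent error: with the convention $A^*_x=\frac{d}{dt}\exp(-tA)\cdot x\vert_{t=0}$ the map $A\mapsto A^*$ is a Lie algebra \emph{homomorphism}, $[A^*,B^*]=[A,B]^*$ (this is the sign the paper itself uses in its Koszul formula \eqref{LChomog}), not an anti-homomorphism. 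With the correct bracket sign, your term-by-term substitution would produce $+\left(N^{J_0}(A,B)\right)^*_{x_0}$, the opposite of \eqref{eq:NJinv}; your two sign mistakes cancel, which is why the bookkeeping appears to close up --- and the fact that the honest version of your computation gives the wrong sign is itself proof that the dropped correction terms cannot all vanish.

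The paper circumvents the problem by never bracketing $JA^*$ with anything: it uses \eqref{eq:nablaJN} to rewrite $N^J(X,Y)=2J(\nabla_YJ)X-2J(\nabla_XJ)Y$ for the Levi Civita connection of $g_J$, reduces everything to $g_J(\nabla_{A^*}B^*,C^*)$ at $x_0$ via the Koszul formula \eqref{LChomog} --- which involves only brackets of genuine fundamental fields --- and then checks that the leftover terms organize into cyclic sums that vanish because $d\omega=0$. Note that your argument never uses the closedness of $\omega$, which is essential in the paper's proof; that is a further warning sign. To salvage a direct approach you would have to carry the correction terms explicitly and show that, together with $d\omega=0$, they account for the discrepancy between the naive answer and the stated one; as written, the proposal does not prove the lemma.
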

\begin{proof}
The metric $g_J$ associated to $\omega$ and $J$ is $G$-invariant, hence $L_{A^*}g=0$ and
the Levi Civita connetion $\nabla=\nabla^{g_J}$ is given by
\begin{equation}\label{LChomog}
2g_J(\nabla_{A^*}B^*,C^*)=(g_J([A,B]^*,C^*) +g_J([A,C]^*,B^*) + g_J([B,C]^*,A^*). 
\end{equation}
Using the fact that $\nabla$ has no torsion, the Nijenhuis tensor is  given by
\begin{eqnarray*}
N^J(X,Y)&=&(\nabla_{JX}J)Y-(\nabla_{JY}J)X-J(\nabla_{X}J)Y+J(\nabla_{Y}J)X \quad  \forall X,Y\in \chi(M)\\
 &=& 2J(\nabla_{Y}J)X-2J(\nabla_{X}J)Y \quad \textrm {using equation}\eqref{eq:nablaJN}\\
 &=& 2J \nabla_{JX}Y +2J[Y,JX]+2\nabla_YX-2J \nabla_{JY}X -2J[X,JY] -2\nabla_XY.
 \end{eqnarray*}
 Hence, using the invariance of $J$:
 \begin{equation}
 N^J(A^*,B^*)=2J \nabla_{JA^*}B^*-2J \nabla_{JB^*}A^* +2[A,B]^* \quad \forall A,B \in \g
 \end{equation}
 so that, at the base point $x_0$, one has
  \begin{equation}
 N^J_{x_0}(A^*,B^*)=2J_{x_0} \nabla_{(J_0A)^*}B^*-2J_{x_0} \nabla_{(J_0B)^*}A^* +2[A,B]^* \quad \forall A,B \in \g
 \end{equation}
Equation \eqref{LChomog} yields 
{\scriptsize{\begin{eqnarray*}
 \left( g_{J}(N^J(A^*,B^*),C^*)\right)_{x_0}&=&-2g_{Jx_0}( \nabla_{(J_0A)^*}B^*,(J_0C)^*)
 +2g_{Jx_0}( \nabla_{(J_0B)^*}A^*,(J_0C)^*) 
+g_{Jx_0}(2[A,B]^*,C^*)\\
 &=&-\left(g_J([J_0A,B]^*, (J_0C)^*)-g_J([J_0A,J_0C]^*, B^*)-g_J([B,J_0C]^*, (J_0A)^*)\right.\\
 &&+g_J([J_0B,A]^*, (J_0C)^*)+g_J([J_0B,J_0C]^*, A^*)+g_J([A,J_0C]^*,(J_0 B)^*)\\
 &&\left.+g_{J}(2[A,B]^*,C^*)\right)_{x_0}\\
 &=& \left(g_{J}\left((-[J_0A,J_0B]+J_0[J_0A,B]+J_0[A,J_0B]+[A,B])^*,C^*\right)\right)_{x_0}\\
 &&+\left(g_J([J_0A,J_0B]^*,C^*)-g_J([J_0A,J_0C]^*, B^*)+g_J([J_0B,J_0C]^*, A^*)\right)_{x_0}\\
 &&+\left(g_J([A,J_0C]^*,(J_0 B)^*)+g_{J}([A,B]^*,C^*)-g_J([B,J_0C]^*, (J_0A)^*)\right)_{x_0}\\
&=& \left(g_{J}\left(-\left(N^{J_0}(A,B)\right)^*,C^*\right)\right)_{x_0}\\
&& \cyclic_{A,B,C}\omega_{x_0}([(J_0A)^*,(J_0B)^*],(J_0C)^*) \cyclic_{A,B,J_0C}\omega_{x_0}([A^*,B^*],(J_0C)^*).
\end{eqnarray*}}}
The last  line vanishes because $\omega$ is closed; indeed $\omega$ being invariant, $L_{A^*}\omega=0$,and
 $$d\omega(A^*,B^*,C^*)=\cyclic_{A,B,C} \omega([A,B]^*,C^*).$$
Hence $\left( g_{J}(N^J(A^*,B^*),C^*)\right)_{x_0}=\left(g_{J}\left(-\left(N^{J_0}(A,B)\right)^*,C^*\right)\right)_{x_0}$ for all $A,B,C \in \g$, and this proves equation \ref{eq:NJinv}.
\end{proof}
In \cite{bib:DellaVedova} the compatible almost complex structure is supposed to be positive to have a reductive homogeneous space.

\begin{prop} For the twistor space $\mathcal{T}$  over the $2n$-dimensional hyperbolic space,
endowed with its Kirillov-Kostant-Souriau symplectic form, the   almost complex structures $J^+$ is compatible and integrable, but not positive, and the almost complex structure $J^-$ is positive, compatible, and maximally non integrable.
\end{prop}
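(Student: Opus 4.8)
The plan is to reduce everything to the base point $t_0$, where the symplectic form $\omega_{t_0}$, the splitting $T_{t_0}\mathcal{T}=\mathcal{V}_{t_0}\oplus\mathcal{H}_{t_0}=\pi_{1*_{\Id}}(\mathfrak{q}\oplus\mathfrak{p})$, and the two almost complex structures $J^{\pm}_{t_0}$ are given explicitly; invariance under $SO_0(1,2n)$ then carries each conclusion over to all of $\mathcal{T}$. From the formula for $J^{\pm}_{t_0}$ one reads off $(J^{\pm})^2=-\Id$ at once. One should also note that although $g_{J^+}$ will turn out to be indefinite, it is non-degenerate, so Lemma~\ref{lem:NJhomog} applies to $J^+$ as well as to $J^-$ (its proof uses only the Koszul formula for the Levi-Civita connection of the invariant (pseudo-)metric $g_{J^{\pm}}$, the closedness of $\omega$, and the reductive decomposition $so(1,2n)=\mathfrak{u}(n)\oplus\m$ with $\m=\mathfrak{q}\oplus\mathfrak{p}$, none of which needs positivity).

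\emph{Compatibility and positivity.} Writing a tangent vector at $t_0$ as $\xi(u,B):=\pi_{1*_{\Id}}\left(\begin{smallmatrix}0&{}^tu\\u&B\end{smallmatrix}\right)$ with $u\in\R^{2n}$ and $B\in\mathfrak{q}$, I would use only $j_0Bj_0=B$ for $B\in\mathfrak{q}$ (which is $Bj_0=-j_0B$ rewritten) to get $[j_0B,j_0B']=[B,B']$ and hence $\omega_{t_0}(J^{\pm}\xi(u,B),J^{\pm}\xi(v,B'))=\omega_{t_0}(\xi(u,B),\xi(v,B'))$; combined with $(J^{\pm})^2=-\Id$ this shows $J^{\pm}$ are compatible. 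Using in addition $\Tr(B^2)=-\Vert B\Vert^2$ for $B\in so(2n)$, one finds
$$\omega_{t_0}\bigl(\xi(u,B),J^{\pm}\xi(u,B)\bigr)=\mp\,2|u|^2+2\Vert B\Vert^2,$$
positive definite for $J^-$ and indefinite for $J^+$ (for $n\ge2$; when $n=1$ one has $\mathcal{T}=H_2$ and $J^+$ is just the reversed orientation). This yields the compatibility and (non-)positivity parts of the statement.

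\emph{The Nijenhuis tensors.} By Lemma~\ref{lem:NJhomog}, $N^{J^{\pm}}_{t_0}(A^*,B^*)=-\bigl(N^{J_0^{\pm}}(A,B)\bigr)^*_{t_0}$, where $J_0^{\pm}:so(1,2n)\to so(1,2n)$ extends $J^{\pm}_{t_0}$ and may be chosen to be $0$ on $\mathfrak{u}(n)$, $B\mapsto j_0B$ on $\mathfrak{q}$, and $u\mapsto\pm j_0u$ on $\mathfrak{p}$. Since $\pi_{1*_{\Id}}$ kills $\mathfrak{u}(n)$, only the class of $N^{J_0^{\pm}}(A,B)$ modulo $\mathfrak{u}(n)$ is relevant, and by bilinearity and skew-symmetry it suffices to let $A,B$ run over $\mathfrak{q}$ and $\mathfrak{p}$. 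Using the bracket relations $[\mathfrak{q},\mathfrak{q}]\subset\mathfrak{u}(n)$, $[\mathfrak{q},\mathfrak{p}]\subset\mathfrak{p}$, $[\mathfrak{p},\mathfrak{p}]\subset so(2n)=\mathfrak{u}(n)\oplus\mathfrak{q}$; the explicit bracket sending $\left(\begin{smallmatrix}0&{}^tu\\u&0\end{smallmatrix}\right),\left(\begin{smallmatrix}0&{}^tv\\v&0\end{smallmatrix}\right)$ to $\left(\begin{smallmatrix}0&0\\0&u\,{}^tv-v\,{}^tu\end{smallmatrix}\right)$ together with the standard action of $so(2n)$ on $\mathfrak{p}\cong\R^{2n}$; and the identities $j_0Cj_0=C$ for $C\in\mathfrak{q}$ and ``conjugation by $j_0$ reverses the $\mathfrak{q}$-component of an element of $so(2n)$'', a short computation should give, for $B,B',C\in\mathfrak{q}$ and $u,v\in\mathfrak{p}$,
$$N^{J_0^{\pm}}(B,B')=0,\qquad N^{J_0^{+}}(u,C)=0,\qquad N^{J_0^{-}}(u,C)=4\,Cu,$$
$$N^{J_0^{+}}(u,v)\in\mathfrak{u}(n),\qquad N^{J_0^{-}}(u,v)\equiv -4\,\bigl(u\,{}^tv-v\,{}^tu\bigr)_{\mathfrak{q}}\pmod{\mathfrak{u}(n)},$$
where $(\,\cdot\,)_{\mathfrak{q}}$ is the $\mathfrak{q}$-component in $so(2n)=\mathfrak{u}(n)\oplus\mathfrak{q}$. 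Then $N^{J^+}_{t_0}=0$, hence $N^{J^+}\equiv0$ on $\mathcal{T}$ and $J^+$ is integrable by Newlander-Nirenberg; while for $J^-$ the vectors $Cu$ ($C\in\mathfrak{q}$, $u\in\R^{2n}$) span a non-zero $U(n)$-submodule of $\R^{2n}$, which is irreducible as a real $U(n)$-module ($U(n)$ being transitive on spheres), hence all of $\mathfrak{p}$, and the $u\,{}^tv-v\,{}^tu$ span $so(2n)$, so their $\mathfrak{q}$-components span $\mathfrak{q}$. Thus $\Image N^{J^-}_{t_0}\supseteq\pi_{1*_{\Id}}(\mathfrak{p}+\mathfrak{q})=T_{t_0}\mathcal{T}$, and $J^-$ is maximally non integrable.

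The only step I expect to be non-formal is the vanishing of the $\mathfrak{q}$-component of $N^{J_0^{+}}(u,v)$ on the block $\mathfrak{p}\times\mathfrak{p}$: this is precisely where the constant curvature (conformal flatness) of $H_{2n}$ enters — in general that component is a universal expression in the curvature operator of the base governing integrability of the Atiyah-Hitchin-Singer structure, and it is the space-form identity that makes the three bracket contributions cancel. For $J^-$ those same contributions add up instead, leaving the ``curvature'' term $-4(u\,{}^tv-v\,{}^tu)_{\mathfrak{q}}$; since $H_{2n}$ is a space form this term is non-degenerate enough to exhaust $\mathfrak{q}$, and together with the mixed block $\mathfrak{p}\times\mathfrak{q}$ (which supplies $\mathfrak{p}$) it forces maximal non-integrability. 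The remaining checks — the two cancellations on $\mathfrak{q}\times\mathfrak{q}$ and $\mathfrak{p}\times\mathfrak{q}$, and the compatibility and positivity identities — are routine bookkeeping with the displayed identities.
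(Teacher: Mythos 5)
Your proposal is correct and follows essentially the same route as the paper: reduce to the base point $t_0$, invoke Lemma~\ref{lem:NJhomog} to replace $N^{J^\pm}_{t_0}$ by the algebraic tensor $N^{J_0^\pm}$ on $\mathfrak{so}(1,2n)$, and compute it block by block on $\mathfrak{q}$ and $\mathfrak{p}$, with the mixed block $\mathfrak{p}\times\mathfrak{q}$ supplying $\mathfrak{p}$ and the block $\mathfrak{p}\times\mathfrak{p}$ supplying $\mathfrak{q}$ for $J^-$, while all blocks cancel for $J^+$. The only difference is that you write out the compatibility and (non-)positivity verifications and the applicability of the lemma in the indefinite case, which the paper simply records as observations; your explicit formulas agree with the paper's.
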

\begin{proof}
It remains to show that $N^{J^+}=0$ and that $\Image N^{J^-}_t=T_t\mathcal{T}$.
We choose a base point  $x_0=t_0=\pi_1(\Id)$. By invariance under the action of $G=SO_0(1,2n)$, it is enough to prove the properties
for $N^{J^\pm}_{t_0}$. Observe that $\pi_{1*_{\Id}}A=-A^*_{t_0}$ for any $A\in\g=\mathfrak{so}(1,2n)$. This Lie algebra splits as
$\mathfrak{so}(1,2n)=\mathfrak{u}(n)\oplus \mathfrak{q}\oplus \mathfrak{p}$ and $A^*_{t_0}=0$ iff $A\in \mathfrak{u}(n)=
\left\{A\in \mathfrak{so}(1,2n)\,\vert\,A=\left( \begin{matrix}
0 & 0\\ 0 & B \end{matrix}\right) \textrm{ with } B\in \mathfrak{so}(2n) \textrm{ and } Bj_0=j_0 B \right\}$.\\

We can choose the map $J^\pm_0:\g\rightarrow\g$ such that $J_{t_0}A^*=(J_0A)^*_{t_0}$
 given by
 $$
 J^\pm_0\left( \begin{matrix}
0 & ^tu\\ u & B \end{matrix}\right)=\left( \begin{matrix}
0 & \pm ^t(j_0u)\\ \pm j_0u & j_0B \end{matrix}\right).$$ 
Using Lemma \ref{lem:NJhomog} and equation \eqref{eq:NJinv}, the image of $N^{J^\pm}_{t_0}$ is  the space of fundamental vector fields at $t_0$ corresponding to the image of $N^{J^\pm_0}$. It is thus enough to compute
$$N^{J^\pm_0}(A,A')=[J^\pm_0A,J^\pm_0A'] -J^\pm_0[J^\pm_0A,A']-J^\pm_0[A,J^\pm_0A']-[A,A']\quad\forall A,A' \in\mathfrak{so}(1,2n).$$
If $Bj_0=-\epsilon_B j_0B$ and $Cj_0=-\epsilon_C j_0C$ with 
$\epsilon_B= \pm 1$ and $\epsilon_C=\pm 1$ ($+1$ corresponding to an element in $\mathfrak{q}$ and $-1$ corresponding to an element in $\mathfrak{u}(n)$), a  straightforward computation gives
$$
N^{J^\pm_0}\left( \left( \begin{matrix}
0 & ^tu\\ u & B \end{matrix}\right),\left( \begin{matrix}
0 & ^tv\\ v & C \end{matrix}\right)     \right)=\left( \begin{matrix}
0 & ^tw\\ w & D \end{matrix}\right)
$$
where $D:=(\pm 1-1)\left(u \otimes ^tv-v\otimes ^tu-j_0u\otimes ^t(j_0v)-j_0v\otimes ^t(j_0u)\right)$\\ and 
$w:=(\pm 1-1)\left((\epsilon_A+1)Av-(\epsilon_B+1)Bv\right)$.\\
Clearly, $N^{J^+_0}$ vanishes identically, $N^{J^-_0}$ vanishes on $\mathfrak{so}(2n)\times \mathfrak{so}(2n)$ but $N^{J^-_0}$ restricted to $\mathfrak{p}\times \mathfrak{q}$ has for image the whole of  $\mathfrak{p}$ ($w=-4Av$ can be any vector in $\R^{2n}$) and $N^{J^-_0}$ restricted to $\mathfrak{p}\times \mathfrak{p}$ has for image the whole of  $\mathfrak{q}$ since any matrix
$B\in  \mathfrak{so}(2n)$ such that $Bj_0=-j_0 B$ is in the span of the matrices of the form
$\left(u \otimes ^tv-v\otimes ^tu-j_0u\otimes ^t(j_0v)+-j_0v\otimes ^t(j_0u)\right)$ with $u,v\in\R^{2n}$. Hence the image of $N^{J^-_0}$ is $\mathfrak{p}\oplus \mathfrak{q}$ and the 
corresponding fundamental vector fields at $t_0$ give the whole tangent space $T_{t_0}\mathcal{T}$.
\end{proof}

\end{document}